\documentclass[a4paper,10pt]{amsart}
\usepackage[english]{babel}
\usepackage[latin1]{inputenc}
\usepackage{amsmath,amsfonts,amssymb,amsthm,amscd,array,stmaryrd,mathrsfs,bbm}
\usepackage[all]{xy}
\usepackage{anysize}\marginsize{25mm}{25mm}{30mm}{30mm}
\addtolength{\parskip}{8pt}
\allowdisplaybreaks[4]
\usepackage[misc]{ifsym}
\usepackage{textcomp,booktabs}
\usepackage{graphicx}
\usepackage[usenames,dvipsnames]{color}
\usepackage{colortbl}  
\definecolor{mygray}{gray}{.7}
\usepackage[colorlinks,unicode=false,linkcolor=blue,citecolor=blue]{hyperref}

\numberwithin{equation}{section}
\vfuzz2pt 
\hfuzz2pt 
\theoremstyle{plain}
\newtheorem{theorem}{Theorem}

\newtheorem{lemma}{Lemma}[section]
\newtheorem{corollary}[lemma]{Corollary}

\theoremstyle{definition}
\newtheorem{definition}[lemma]{Definition}
\newtheorem{remark}[lemma]{Remark}
\newtheorem{example}[lemma]{Example}


\usepackage{tikz-cd}

\title{Sweedler duality for Hom-(co)algebras and Hom-(co)modules}

\date{\today}

\subjclass[2020]{Primary 17A30; Secondary 17A60, 17D30, 05A10}

\keywords{Sweedler duality, Hom-algebra, Hom-coalgebra, Hom-module, Hom-comodule}

\author{Jiacheng Sun}
\address{School of Mathematics, Southeast University, Nanjing 211189, China}
\email{220242018@seu.edu.cn}
	
\author{Shuanhong Wang}
\address{Shing-Tung Yau Center, School of Mathematics, Southeast University, Nanjing 210096, China}
\email{shuanhwang@seu.edu.cn}

\author{Chi Zhang}
\address{College of Sciences, Northeastern University, Shenyang, Liaoning 110004, China}
\email{zhangchi@mail.neu.edu.cn}
\thanks{Corresponding author: Chi Zhang}
	
\author{Haoran Zhu}
\address{School of Physical and Mathematical Sciences, Nanyang Technological University, 21 Nanyang Link, 637371, Singapore}
\email{haoran.zhu@ntu.edu.sg}
	
\begin{document}
	
	\begin{abstract}
         We establish a dual version of infinite-dimensional Hom-algebras and Hom-modules by using the Sweedler duality construction.  Additionally, linear morphisms between infinite-dimensional Hom-algebras (resp. Hom-modules) and Hom-coalgebras (resp. Hom-comodules) are derived under this construction. As an application, we present a Hom-type binary linearly recursive sequences and show that the Sweedler duality construction can be utilized to determine the minimal polynomials of finite-codimensional ideals.
	\end{abstract}

        \maketitle

        \section{Introduction}
	Algebraic deformation theory was first studied systematically and in depth by Gerstenhaber \cite{G} and is now one of the most important branches of algebra. In recent years, the introduction
    of another type of algebraic deformation algebra, namely Hom-algebra, has attracted the attention of many algebraists. In 2006, Hartwig, Larsson, and Silvestrov \cite{HLS} introduced Hom-Lie
    algebras to describe $q$-deformations of Witt and Virasoro algebras. 
        The quasi-Lie and quasi-Hom-Lie algebras were introduced by Larsson and Silvestrov~\cite{LS} in 2005. Later, in 2008, Makhlouf and Silvestrov~\cite{MS1} studied Hom-associative, Hom-Leibniz, Hom-Lie, Lie-admissible, and flexible Hom-algebras. Building on this foundation, the authors \cite{MS2,MS3} generalized the general (co)algebraic structure to Hom-(co)algebras, which essentially extends the concept of associative algebras by deforming the associativity law into a Hom-associativity condition. Later, Yau \cite{Y1,Y2,Y3,Y4} proposed the concepts of Hom-(co)modules, Hom-Hopf modules and the duality of Hom-algebras, thus enriching the field of Hom-algebra theory.
        As an emerging field within algebra, (Bi)Hom-algebra is closely related to category theory (see, e.g.,  \cite{YW}), combinatorics~\cite{LZZ}, braided group representations (see, e.g., \cite{LWWZ}), and quantum algebras~\cite{CS, LZZ2, SWZ, Sheng, SB, SD}, producing a wealth of research results in recent years.

        The exponentials of infinitesimal characters and their convolutions are known to produce characters with elegant combinatorial counterparts \cite{DBHPS}. In a Hopf algebra, the set of characters forms a group that acquires a natural structure of an (infinite-dimensional) Lie group under certain conditions, which are often met in practice, such as grading~\cite{Du}. All these elements relate to the largest dual that can be considered for a Hopf algebra, known as the Sweedler duality. This duality is, in a sense, related to finite-state automaton theory \cite{DFLL} and algebraic combinatorics \cite{V}.

        In the finite-dimensional case, the structure of the Hom-coalgebra follows directly from the classical duality of the Hom-algebra, and such a conclusion is known. However, this conclusion may not hold in the infinite-dimensional case. This naturally leads to the question of how to get the dual version of such a structure in the infinite-dimensional case. In other words, is it possible to induce the structure of Hom-coalgebras from the general dual of Hom-algebras (resp. from Hom-modules to Hom-comodules) by constructing a correspondence in the infinite-dimensional case? This question was asked by Wang~\cite{SHW} in his conference talk.

        In this paper, we will answer his question by investigating the Sweedler duality for Hom-algebras and their induction into Hom-coalgebras in general. We present the new structure of the Hom-coalgebra induced by the Hom-algebra in the context of the Sweedler duality. Furthermore, we extend our results to Hom-modules and establish the connection between Hom-modules and Hom-comodules.

        The structure of this paper is as follows. In Section~\ref{sect2}, we review some basic concepts and lemmas that discuss the relationship between Hom-algebras and Hom-coalgebras. The structure of Hom-coalgebra under Sweedler duality is given in Theorem~\ref{f}. A morphism of the Hom-coalgebra is derived using the above condition in Corollary~\ref{f1}. In Section~\ref{sect3}, we present a right Hom-comodule of Hom-coalgebra, which is induced by the right module of Hom-algebra under Sweedler duality, as shown in Theorem~\ref{f2}. The (right) Hom-comodule morphism is also given in Theorem~\ref{f3}. In the final section~\ref{sect4}, we develop a Hom-deformed version of binary linearly recursive sequences and use the Sweedler duality to find the recursive sequences of the minimal polynomial $h(x,y)$.

        \paragraph{Notation} Throughout this paper, we assume that the characteristic of the field $\mathbb{K}$ is zero. And we will use the following notation for the mapping role of elements: $f(v):=\langle f, v\rangle$ in which $v$ is any element of some Hom-algebra $G$ and $f$ is an arbitrary element in duality $G^*$. Dual maps of Hom-algebra $(G,\mu,\alpha)$ are \(\mu^{*}\!:G^{*}\!\to G^{*}\!\otimes\!G^{*}\), \(\langle\mu^{*}(f),x\!\otimes\!y\rangle=\langle f,\mu(x\!\otimes\!y)\rangle\), \(\alpha^{*}(f)=f\!\circ\!\alpha\). And for a right Hom-module $(M,\psi,\gamma)$ over $G$ we set \(\psi^{*}\!:M^{*}\!\to M^{*}\!\otimes\!G^{*}\), \(\langle\psi^{*}(\xi),m\!\otimes\!g\rangle=\langle\xi,\psi(m\!\otimes\!g)\rangle\), \(\gamma^{*}(\xi)=\xi\!\circ\!\gamma\). For more details on notation, this paper follows the setting of \cite{G}.
	
\section{Sweedler duality for Hom-algebras}\label{sect2}
        In this section, we will first give an overview of the basic concepts related to the structures of Hom-(co)algebras and the interrelationship between different Hom-(co)algebras (see Definition 2.3 in \cite{DY}). We then move on to introduce the definition of the {\em Sweedler duality}, which differs from the conventional duality (see Chapter 6 in \cite{G}).

        \begin{definition}
        A {\em Hom-associative algebra} is a triple $(G, \mu, \alpha)$, where $G$ is a linear space, $\mu: G \otimes G \rightarrow G$ is a bilinear multiplication with $\mu(g\otimes g')=gg'$. For simplicity, when there is no confusion, we will omit the notation $\mu$. $\alpha: G \rightarrow G$ is a homomorphism. Otherwise, it satisfies the following conditions
       \begin{align}
       \alpha(g)(hk) &= (gh)\alpha(k), \quad \text{ (Hom-associativity)} \label{homAssoc}\\
       \alpha(hk)&=\alpha(h)\alpha(k). \quad \text{ (multiplicativity)} \label{multiplicativity_1}
       \end{align}
       \end{definition}
       
        They can be represented by the following commutative diagrams:
        \begin{equation*}
        \xymatrix{G\otimes G\otimes G\ar@{->}[d]_{\alpha\otimes \mu} & &G\otimes G \ar@{<-}_{\mu\otimes\alpha}[ll]\ar@{->}_{\mu}[d] \\
        G\otimes G & & G, \ar@{<-}_{\mu }[ll] }
        \quad \quad \quad \quad\quad
        \xymatrix{G\otimes G\ar@{->}[d]_{\mu} & &G\otimes G \ar@{<-}_{\alpha\otimes\alpha}[ll]\ar@{->}_{\mu}[d] \\
        G & & G. \ar@{<-}_{\alpha }[ll] }
        \end{equation*}
        
        In the following of this paper, we will abbreviate {\em Hom-associative algebra} to {\em Hom-algebra}, and sometimes we refer to a Hom-algebra $(G, \mu, \alpha)$ as $G$. Now, for two Hom-algebras $(G, \mu, \alpha)$ and $(G', \mu', \alpha')$, let $f: G \to G'$ be a linear map, then $f$ is said to be a morphism of Hom-algebras if it satisfies the following conditions
        \begin{align}\label{s1}
        \mu' \circ (f \otimes f) = f \circ \mu, \quad\quad f \circ \alpha = \alpha' \circ f.
        \end{align}

       \begin{definition}
        Let $(G,\mu,\alpha)$ be a Hom-algebra, then a subspace $H$ of $G$ is a {\em Hom-associative subalgebra} of $G$ if 
        $bc\in H$ and $\alpha(b)\in H$ for any $b,c\in H$.
        In particular, if both $ab\in H$ and $ba\in H$ hold for any $a\in G$ and $b\in H$, we say that $H$ is an {\em ideal} of $G$.
       \end{definition}

       \begin{definition}
       A Hom-coalgebra is a triple $(C, \Delta, \beta)$, where $C$ is a linear space, $\Delta: C \rightarrow C \otimes C$ is a linear map with notation $\Delta(c):=\sum_{(c)}c_{(1)}\otimes c_{(2)}$ for all $c\in C$. And $\beta: C \rightarrow C$ is a homomorphism which satisfies
      \begin{align}
      (\beta \otimes \Delta) \circ \Delta &= (\Delta \otimes \beta) \circ \Delta, \quad \text{ (Hom-coassociativity)} \label{hom-coassoc} \\
      \Delta\circ\beta&=(\beta\otimes\beta)\circ\Delta.  \quad \text{ (comultiplicativity)}\label{multiplicativity_2}
       \end{align}
        \end{definition}

        They can be expressed by the following commutative diagrams:
\begin{equation*}
\xymatrix{C\otimes C\otimes C\ar@{<-}[d]_{\Delta\otimes \beta} & &C\otimes C \ar@{->}_{\beta\otimes \Delta}[ll]\ar@{<-}_{\Delta}[d] \\
C\otimes C & & C, \ar@{->}_{\Delta }[ll] }
\quad\quad\quad\quad\quad
\xymatrix{C\otimes C\ar@{<-}[d]_{\Delta} & &C\otimes C \ar@{->}_{\beta\otimes \beta}[ll]\ar@{<-}_{\Delta}[d] \\
C & & C. \ar@{->}_{\beta }[ll] }
\end{equation*}\par

        Similarly, we will denote a Hom-coalgebra $(C, \Delta, \beta)$ by $C$. A morphism of Hom-coalgebras is a linear map of the underlying $\mathbb K$-linear spaces that commutes with the twisting maps and the comultiplications.

        Now we give some lemmas that describe some fundamental properties of Hom-(co)algebras.

        \begin{lemma}[see \cite{MS2}]\label{s}
        Let $(G,\mu,\alpha)$ be a finite-dimensional Hom-algebra, then $(G^{*},\mu^{*},\alpha^{*})$ is a Hom-coalgebra where $\mu^{*}: G^{*}\rightarrow G^{*}\otimes G^{*}$, $\alpha^{*}(a^{*})=a^{*}\alpha$.
        \end{lemma}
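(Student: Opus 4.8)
The plan is to set $\Delta := \mu^{*}$ and $\beta := \alpha^{*}$ and to verify the two Hom-coalgebra axioms \eqref{hom-coassoc} and \eqref{multiplicativity_2} by transposing the corresponding algebra identities: I would pair each proposed equality against test vectors in $G \otimes G$ or $G \otimes G \otimes G$ and invoke non-degeneracy of the evaluation pairing to promote the pointwise identities to equalities of maps.

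The crucial preliminary point, and the only place where finite-dimensionality is genuinely used, is that the canonical map $G^{*} \otimes G^{*} \to (G \otimes G)^{*}$ is an isomorphism when $\dim G < \infty$. This is exactly what allows $\mu^{*}$ to take values in $G^{*} \otimes G^{*}$ rather than merely in $(G \otimes G)^{*}$, so that $\Delta$ is a genuine comultiplication. I would record this identification at the outset, together with its iterate $G^{*} \otimes G^{*} \otimes G^{*} \cong (G \otimes G \otimes G)^{*}$, since the coassociativity check naturally lives on the triple tensor power.

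For Hom-coassociativity, I would evaluate both composites on an arbitrary $f \in G^{*}$, write $\Delta(f) = \sum f_{(1)} \otimes f_{(2)}$, and pair the results against $x \otimes y \otimes z$. Using $\langle \alpha^{*}(g), w \rangle = \langle g, \alpha(w) \rangle$, the left-hand side $(\beta \otimes \Delta) \circ \Delta$ reduces to $\langle f, \alpha(x)(yz) \rangle$, while the right-hand side $(\Delta \otimes \beta) \circ \Delta$ reduces to $\langle f, (xy)\alpha(z) \rangle$; these agree precisely by Hom-associativity \eqref{homAssoc}. For comultiplicativity, the same strategy on $x \otimes y$ turns $\Delta \circ \beta$ into $\langle f, \alpha(xy) \rangle$ and $(\beta \otimes \beta) \circ \Delta$ into $\langle f, \alpha(x)\alpha(y) \rangle$, which coincide by multiplicativity \eqref{multiplicativity_1}.

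I expect the main obstacle to be conceptual rather than computational: the whole argument rests on the isomorphism $G^{*} \otimes G^{*} \cong (G \otimes G)^{*}$, which fails in infinite dimensions and is exactly the gap motivating the Sweedler-dual construction pursued in the remainder of the paper. Once this identification is secured, both axioms collapse to transposing the defining identities of $(G, \mu, \alpha)$, and the non-degeneracy of the pairing finishes the argument.
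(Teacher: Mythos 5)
Your proposal is correct: the identification $G^{*}\otimes G^{*}\cong (G\otimes G)^{*}$ in finite dimensions is indeed the only essential use of the hypothesis, and the two axioms then follow by transposing \eqref{homAssoc} and \eqref{multiplicativity_1} against test vectors exactly as you describe. The paper quotes this lemma from \cite{MS2} without proof, but the computations you outline are precisely those carried out in the proof of Theorem~\ref{f} (with $G^{\circ}$ in place of $G^{*}$), so your argument matches the paper's approach.
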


        \begin{lemma}[see \cite{MS2}]\label{qq}
        Let $(G,\mu,\alpha)$ and $(G',\mu',\alpha')$ be two finite-dimensional Hom-algebras, then $f:G\rightarrow G'$ is a morphism of Hom-algebras if and only if $f^{*}:G'^{*}\rightarrow G^{*}$ is a morphism of Hom-coalgebras.
        \end{lemma}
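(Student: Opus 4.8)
The plan is to show that each of the two defining equations of a Hom-algebra morphism dualizes exactly to one of the two defining equations of a Hom-coalgebra morphism, and that in finite dimensions every such dualization step is an equivalence rather than a one-way implication. First I would recall that, by Lemma~\ref{s}, $(G^{*},\mu^{*},\alpha^{*})$ and $(G'^{*},\mu'^{*},\alpha'^{*})$ are Hom-coalgebras, and that the dual map $f^{*}\colon G'^{*}\to G^{*}$ is characterized by $\langle f^{*}(\xi),v\rangle=\langle\xi,f(v)\rangle$. The two functoriality facts I would lean on throughout are the contravariant composition rule $(g\circ h)^{*}=h^{*}\circ g^{*}$ and the identity $(g\otimes h)^{*}=g^{*}\otimes h^{*}$; the latter rests on the canonical isomorphism $(G\otimes G)^{*}\cong G^{*}\otimes G^{*}$, which is precisely where finite-dimensionality enters.

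For the forward direction, I would assume $f$ is a morphism of Hom-algebras, so that $\mu'\circ(f\otimes f)=f\circ\mu$ and $f\circ\alpha=\alpha'\circ f$. Taking duals of the first relation and applying the two functoriality facts yields $(f^{*}\otimes f^{*})\circ\mu'^{*}=\mu^{*}\circ f^{*}$, which is exactly the comultiplicativity condition for $f^{*}$ with respect to the coalgebras $(G'^{*},\mu'^{*})$ and $(G^{*},\mu^{*})$. Dualizing the second relation gives $\alpha^{*}\circ f^{*}=f^{*}\circ\alpha'^{*}$, the compatibility of $f^{*}$ with the twisting maps. Hence $f^{*}$ is a morphism of Hom-coalgebras.

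For the converse, I would use that over a field all spaces in sight are reflexive: under the canonical identification $G\cong G^{**}$ one has $f^{**}=f$, $(\mu^{*})^{*}=\mu$, and $(\alpha^{*})^{*}=\alpha$, so dualizing the two Hom-coalgebra conditions for $f^{*}$ returns the two Hom-algebra conditions for $f$. Equivalently, since the evaluation pairing is non-degenerate in finite dimensions, each equality in the forward computation is reversible, and reading the chain of dual equations backwards recovers $f$ being a Hom-algebra morphism; this establishes the \emph{if and only if}.

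The step I expect to be the essential point — rather than a genuine difficulty — is the identification $(G\otimes G)^{*}\cong G^{*}\otimes G^{*}$ underlying $(f\otimes f)^{*}=f^{*}\otimes f^{*}$ and the very fact that $\mu^{*}$ lands in $G^{*}\otimes G^{*}$. This is exactly what breaks down in the infinite-dimensional setting, where $G^{*}\otimes G^{*}$ is a proper subspace of $(G\otimes G)^{*}$; it is the reason the lemma is confined to finite dimensions and the motivation for the Sweedler duality developed in the remainder of the paper.
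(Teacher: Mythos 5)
Your proof is correct: the forward direction by dualizing the two defining identities via $(g\circ h)^{*}=h^{*}\circ g^{*}$ and $(f\otimes f)^{*}=f^{*}\otimes f^{*}$, and the converse by non-degeneracy of the evaluation pairing (equivalently reflexivity $G\cong G^{**}$) in finite dimensions, is exactly the standard argument, and you correctly isolate the identification $(G\otimes G)^{*}\cong G^{*}\otimes G^{*}$ as the step that fails in infinite dimensions. The paper itself gives no proof of this lemma --- it is imported from \cite{MS2} --- so there is nothing to contrast with; your argument matches what that reference does.
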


        \begin{lemma}[see \cite{EABC}] \label{zz}
        Let $A$ and $B$ be two linear spaces, $I$ and $J$ be two subspaces of $A$ and $B$ respectively. If there exists a map $\pi:A\otimes B\rightarrow A/I\otimes B/J$, we have $\operatorname{Ker}(\pi)=A\otimes J+B\otimes I$ and $(A\otimes B)/(A\otimes J+B\otimes I)\cong A/I\otimes B/J$.
        \end{lemma}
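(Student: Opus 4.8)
The plan is to identify $\pi$ with the canonical map $p_A\otimes p_B$, where $p_A\colon A\to A/I$ and $p_B\colon B\to B/J$ are the quotient projections, and then to compute its kernel (reading the summand written as $B\otimes I$ in the obvious admissible way, namely $I\otimes B\subseteq A\otimes B$). First I would observe that since $p_A$ and $p_B$ are surjective, the simple tensors $p_A(a)\otimes p_B(b)=\pi(a\otimes b)$ span $A/I\otimes B/J$, so $\pi$ is surjective; hence the first isomorphism theorem for $\K$-linear spaces gives $(A\otimes B)/\operatorname{Ker}(\pi)\cong A/I\otimes B/J$. This reduces the second assertion to the computation of $\operatorname{Ker}(\pi)$, and so the entire lemma follows once we establish $\operatorname{Ker}(\pi)=I\otimes B+A\otimes J$.

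The inclusion $I\otimes B+A\otimes J\subseteq\operatorname{Ker}(\pi)$ is immediate: for $x\in I$ and $b\in B$ we have $\pi(x\otimes b)=p_A(x)\otimes p_B(b)=0$ because $p_A(x)=0$, and symmetrically $\pi(a\otimes y)=0$ whenever $y\in J$; since such simple tensors span the two summands, linearity finishes this direction.

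The reverse inclusion $\operatorname{Ker}(\pi)\subseteq I\otimes B+A\otimes J$ is the main obstacle, because a general kernel element is a sum of simple tensors and one cannot argue tensor-by-tensor. To handle it I would exploit that $\K$ is a field, so both subspaces split: choose complements $A=I\oplus A'$ and $B=J\oplus B'$, yielding the direct-sum decomposition
\[
A\otimes B=(I\otimes J)\oplus(I\otimes B')\oplus(A'\otimes J)\oplus(A'\otimes B').
\]
Here the first three summands lie in $I\otimes B+A\otimes J$ and are annihilated by $\pi$ (each contains a factor in $I$ or in $J$), while $p_A|_{A'}$ and $p_B|_{B'}$ are isomorphisms, so $\pi$ restricts to an isomorphism $A'\otimes B'\xrightarrow{\ \sim\ }A/I\otimes B/J$. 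Consequently $\pi$ kills exactly the first three summands, giving $\operatorname{Ker}(\pi)=(I\otimes J)\oplus(I\otimes B')\oplus(A'\otimes J)=I\otimes B+A\otimes J$.

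Alternatively one could bypass the choice of complements by invoking the right-exactness (indeed exactness over a field) of the functor $(-)\otimes B$ applied to $0\to I\to A\to A/I\to 0$, combined with the analogous sequence for the second factor, to identify the kernel. I regard the splitting argument as the more self-contained route; the only genuine subtlety is checking that the three degenerate summands together exhaust $I\otimes B+A\otimes J$ rather than a proper subspace, and the direct-sum bookkeeping above makes this transparent.
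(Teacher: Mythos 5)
The paper offers no proof of this lemma: it is imported verbatim from the reference \cite{EABC} (Abe's book), so there is no in-paper argument to compare against. Your proof is correct and is the standard one: identifying $\pi$ with $p_A\otimes p_B$, splitting $A=I\oplus A'$ and $B=J\oplus B'$ over the field $\mathbb K$, observing that $\pi$ vanishes on the three degenerate summands and restricts to an isomorphism on $A'\otimes B'$, and checking that those three summands are exactly $I\otimes B+A\otimes J$. You also correctly read the paper's typo $B\otimes I$ as $I\otimes B$ (the statement only makes sense with both summands viewed inside $A\otimes B$), and the surjectivity-plus-first-isomorphism-theorem step cleanly reduces the second assertion to the kernel computation. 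No gaps.
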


        \begin{lemma}[see \cite{MS2}]\label{M}
        Let $(G,\mu,\alpha)$ and $(G',\mu',\alpha')$ be two Hom-algebras, the map $f: G\rightarrow G'$ be a morphism of Hom-algebras, $H$ be the ideal of Hom-algebra of $G$, the map $\pi : G\rightarrow G/H$ be a regular full homomorphism. If $\operatorname{Ker}\pi \subseteq \operatorname{Ker} f$, there exists a unique Hom-algebra morphism $g: G/H \rightarrow G^{'}$, which satisfies $g\circ\pi = f$.
        \end{lemma}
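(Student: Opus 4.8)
The plan is to construct $g$ via the universal property of the quotient and then verify that it respects both the multiplication and the twisting map. First I would recall that, since $H$ is an ideal, the quotient $G/H$ carries a canonical Hom-algebra structure: writing $\bar{x}:=\pi(x)$, the multiplication is $\bar{x}\,\bar{y}=\overline{xy}$ and the twisting map is $\bar{\alpha}(\bar{x})=\overline{\alpha(x)}$, both of which are well defined because $GH+HG\subseteq H$ and $\alpha(H)\subseteq H$; with this structure $\pi$ is, as hypothesized, a surjective Hom-algebra morphism. I would then define $g:G/H\to G'$ by $g(\bar{x})=f(x)$. The crucial point is well-definedness: if $\bar{x}=\bar{y}$ then $x-y\in\operatorname{Ker}\pi=H\subseteq\operatorname{Ker} f$, so $f(x)=f(y)$; hence $g$ is a well-defined map, its linearity is inherited from that of $f$ and $\pi$, and by construction $g\circ\pi=f$.

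Next I would check that $g$ is a morphism of Hom-algebras in the sense of \eqref{s1}. Multiplicativity follows from
\[
g(\bar{x}\,\bar{y})=g(\overline{xy})=f(xy)=f(x)f(y)=g(\bar{x})\,g(\bar{y}),
\]
where the middle equality uses that $f$ is a Hom-algebra morphism. Compatibility with the twisting maps is entirely parallel:
\[
g\bigl(\bar{\alpha}(\bar{x})\bigr)=g(\overline{\alpha(x)})=f(\alpha(x))=\alpha'(f(x))=\alpha'\bigl(g(\bar{x})\bigr),
\]
using $f\circ\alpha=\alpha'\circ f$. Thus $g$ satisfies both conditions in \eqref{s1}.

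Finally, uniqueness is immediate from the surjectivity of $\pi$: if some $g'$ also satisfies $g'\circ\pi=f$, then for every $\bar{x}\in G/H$ we have $g'(\bar{x})=g'(\pi(x))=f(x)=g(\bar{x})$, so $g'=g$. The argument is a routine universal-property computation, and the only step I would be genuinely careful about is confirming that the quotient maps descend, that is, that $\alpha(H)\subseteq H$ so the twisting map $\bar{\alpha}$ on $G/H$ is well defined. This is precisely where the \emph{ideal} (rather than merely \emph{subspace}) hypothesis enters: by the definition above an ideal is in particular a Hom-associative subalgebra and is therefore stable under $\alpha$, which is exactly what makes both the quotient Hom-algebra structure and the induced map $g$ well defined.
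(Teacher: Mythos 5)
Your proof is correct and is exactly the standard universal-property argument one would expect: descend $f$ through the quotient using $\operatorname{Ker}\pi=H\subseteq\operatorname{Ker}f$, check compatibility with $\mu$ and with the twisting maps, and get uniqueness from surjectivity of $\pi$. Note that the paper itself does not prove this lemma --- it is imported from \cite{MS2} as a black box --- so there is no in-text proof to compare against; your argument, including the observation that the ideal hypothesis guarantees $\alpha(H)\subseteq H$ so that the quotient Hom-algebra structure and the induced map are well defined, is the intended one and fills that gap correctly.
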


        \begin{remark}
        A \emph{regular full homomorphism} means the canonical quotient map  \(  \pi : G \rightarrow G/I\)  associated with an ideal \(I\subseteq G\); that is, a homomorphism of Hom-associative algebras which is surjective (hence a regular epimorphism) and whose kernel equals \(I\).
        \end{remark}

        In order to study the structure of the Hom-coalgebra induced by the infinite-dimensional Hom-algebra and the correspondence of the finite-codimensional ideals in the morphism of Hom-algebras, we will give the definition of Sweedler duality in Definition~\ref{Sweedler_duality_Hom_alg} and the lemma~\ref{b} below.

\begin{definition}\label{Sweedler_duality_Hom_alg}
Let $G$ be a Hom-algebra over $\mathbb K$. The {\em Sweedler duality} of $G$ is based on the classical algebraic duality $G^*$ as follows:
$$
G^{\circ} := \{ f \in G^* \mid \operatorname{Ker} f \supseteq J, \text{ $J$ is a finite-codimensional ideal of } G \}.
$$
\end{definition}

\begin{lemma}\label{b}
    Let $(G,\mu,\alpha)$ and $(G',\mu',\alpha')$ be two Hom-algebras, $f: G\rightarrow G'$ be a morphism of Hom-algebras. If $J$ is a finite-codimensional ideal of $G'$, then its complete inversion $f^{-1}(J)$ is also a finite-codimensional ideal of $G$.
\end{lemma}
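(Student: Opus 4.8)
The plan is to verify directly that $f^{-1}(J)$ satisfies the three requirements of a finite-codimensional ideal: it is a linear subspace stable under $\alpha$, it absorbs multiplication by $G$ on both sides, and its codimension in $G$ is finite. The first point is immediate, since $f$ is linear and $J$ is a subspace, so $f^{-1}(J)$ is a subspace of $G$.

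For the ideal structure I would exploit the two defining identities of a Hom-algebra morphism, namely $f\circ\mu=\mu'\circ(f\otimes f)$ and $f\circ\alpha=\alpha'\circ f$. Fix $x\in f^{-1}(J)$, so that $f(x)\in J$. For any $g\in G$ the first identity gives $f(gx)=f(g)f(x)$ and $f(xg)=f(x)f(g)$; since $J$ is an ideal of $G'$, both products lie in $J$, whence $gx,xg\in f^{-1}(J)$. For stability under $\alpha$, the second identity gives $f(\alpha(x))=\alpha'(f(x))$, and because an ideal is in particular a Hom-associative subalgebra we have $\alpha'(J)\subseteq J$, so $f(\alpha(x))\in J$ and $\alpha(x)\in f^{-1}(J)$. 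This establishes that $f^{-1}(J)$ is an ideal of $G$.

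The finite-codimensionality is the one point requiring a genuine argument, and I would obtain it from the first isomorphism theorem for linear spaces. Let $\pi:G'\to G'/J$ be the canonical quotient map and consider the composite $\pi\circ f:G\to G'/J$. Its kernel is exactly $\{x\in G: f(x)\in J\}=f^{-1}(J)$, so $\pi\circ f$ factors through an injection $G/f^{-1}(J)\hookrightarrow G'/J$. Since $J$ is finite-codimensional, $\dim_{\mathbb K}(G'/J)<\infty$, and therefore $\dim_{\mathbb K}\bigl(G/f^{-1}(J)\bigr)\le\dim_{\mathbb K}(G'/J)<\infty$; that is, $f^{-1}(J)$ has finite codimension in $G$.

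I do not expect a serious obstacle here: the algebraic part is a formal consequence of the compatibility of $f$ with $\mu$ and $\alpha$, and the only conceptual step is recognizing that finite codimension is transported along the induced embedding into $G'/J$. The point to be careful about is that the absorption property of $J$ supplies the two-sided ideal condition while its subalgebra property supplies $\alpha'$-invariance; both aspects of the hypothesis that $J$ is an ideal are genuinely used.
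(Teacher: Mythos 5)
Your proposal is correct and follows essentially the same route as the paper: the ideal and $\alpha$-stability properties are deduced from the two morphism identities exactly as in the paper's proof, and finite codimension is obtained by inducing an injection $G/f^{-1}(J)\hookrightarrow G'/J$ from a map $G\to G'/J$ with kernel $f^{-1}(J)$. Your formulation of that map as the composite $\pi\circ f$ is in fact slightly cleaner than the paper's (which defines it via a choice of complement of $f^{-1}(J)$), but the argument is the same.
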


\begin{proof}
    According to equation~\eqref{s1}, we obtain the following, which holds for any \( x \in f^{-1}(J) \) and \( a \in G \):
    \begin{align*}
        f(x)\in J \quad \text{and} \quad f(ax)=f(a)f(x)\in J.
    \end{align*}
    Then \( ax \in f^{-1}(J) \). Similarly, we can show that \( xa \in f^{-1}(J) \). For any $x \in f^{-1}(J)$, it can be shown that  $f(x) \in J$ and $\alpha^{\prime}(f(x)) \in J$.

    Observe that $f$ is a morphism of Hom-algebras. Consequently, we have
     $$
     f\circ\alpha(x) = \alpha^{\prime}\circ f(x) \in J,
     $$
     there by establishing $\alpha(x) \in f^{-1}(J)$. We then proceed with $\alpha(f^{-1}(J)) \subseteq f^{-1}(J)$. So $f^{-1}(J)$ is the ideal of Hom-algebra $G$.

    We now demonstrate that $f^{-1}(J)$ is finite-codimensional. Let $g:G\to G'/J$ be a map defined as follows: for any $a\in G$, $a=a_1+a_2$ where $a_1\in f^{-1}(J)$, we have $g(a)=f(a_2)+J$. It is evident that $\operatorname{Ker}(g) = f^{-1}(J)$. Then we can induce the injection $\widetilde{g}:G/f^{-1}(J)\rightarrow G^{'}/J$ ($b+f^{-1}(J)\mapsto f(b)+J$), so we have
    \begin{align*}
       \dim(G/f^{-1}(J))\leq  \dim(G^{'}/J)<+\infty
    \end{align*}
    which implies that the dimension of $G/f^{-1}(J)$ is finite. Consequently, $f^{-1}(J)$ is a finite-codimensional ideal of $G$.
\end{proof}

Inspired by Liu's construction of classical algebras~\cite{LGL}, it is sufficient for us to construct the structure of Hom-coalgebras via the Sweedler duality now.

\begin{theorem}\label{c}
    Let $G$ be a Hom-algebra. Then $G^{\circ}$ is the subspace of $G^{*}$.
\end{theorem}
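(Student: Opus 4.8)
The plan is to verify directly the three defining closure properties of a linear subspace of $G^{*}$: that $G^{\circ}$ contains the zero functional and is closed under scalar multiplication and under addition. The first two are immediate. The zero functional lies in $G^{\circ}$ because its kernel is all of $G$, and $G$ is a finite-codimensional (codimension-zero) ideal of itself. For scalar multiplication, if $f\in G^{\circ}$ vanishes on a finite-codimensional ideal $J$, then $\lambda f$ vanishes on the same $J$ for every $\lambda\in\mathbb{K}$, so $\lambda f\in G^{\circ}$.

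The substance of the proof is closure under addition. Given $f,g\in G^{\circ}$ with $f$ vanishing on a finite-codimensional ideal $I$ and $g$ vanishing on a finite-codimensional ideal $J$, I would propose $K=I\cap J$ as the ideal witnessing $f+g\in G^{\circ}$: both $f$ and $g$, and hence $f+g$, vanish on $I\cap J$. That $I\cap J$ is itself an ideal follows by applying the ideal axioms componentwise. For $a\in G$ and $b\in I\cap J$ one has $ab,ba\in I$ and $ab,ba\in J$, so $ab,ba\in I\cap J$, while $\alpha(b)\in\alpha(I)\cap\alpha(J)\subseteq I\cap J$ using $\alpha(I)\subseteq I$ and $\alpha(J)\subseteq J$.

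It then remains to confirm that $I\cap J$ is finite-codimensional, which I would do via the canonical injective linear map $G/(I\cap J)\hookrightarrow G/I\oplus G/J$, $a+(I\cap J)\mapsto(a+I,\,a+J)$, yielding
\[
\dim\bigl(G/(I\cap J)\bigr)\le\dim(G/I)+\dim(G/J)<+\infty.
\]
Hence $I\cap J$ is a finite-codimensional ideal and $f+g\in G^{\circ}$, completing the verification that $G^{\circ}$ is a subspace of $G^{*}$. The only step demanding any care is this last codimension estimate, but it poses no genuine obstacle: it is the standard embedding of a quotient by an intersection into the direct sum of the two quotients, and the twisting map $\alpha$ interacts with intersections exactly as needed.
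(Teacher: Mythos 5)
Your proposal is correct and follows essentially the same route as the paper: both take the intersection $I\cap J$ of the two witnessing finite-codimensional ideals, note that $f+g$ vanishes on it and that it is again an ideal, and then bound its codimension. The only (immaterial) difference is in that last step, where you use the canonical injection $G/(I\cap J)\hookrightarrow G/I\oplus G/J$ while the paper argues via $H/(J\cap H)\cong (J+H)/J\subseteq G/J$ together with the third isomorphism theorem; both yield the same finiteness conclusion.
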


   \begin{proof}
   It is straightforward to verify that $G^{\circ}$ is closed under $\mathbb K$-multiplication. We proceed to show that $G^{\circ}$ is also closed under addition.

   For any $g,h\in G^{\circ}$, let $J$ and $H$ be two finite-codimensional ideals of $G$ such that $\operatorname{Ker} g\supseteq J$ and $\operatorname{Ker} h\supseteq H$. It is clear that $J\cap H$ is an ideal of the Hom-algebra and $\operatorname{Ker}
    (g+h)\supseteq J\cap H$.

   Our next step is to explain $J\cap H$ is finite-codimensional. Since $H/(J\cap H) \cong (J+H)/J \subseteq G/J$, the quotient $H/(J\cap H)$ is a finite-dimensional linear space.
   Furthermore, since $G/H \cong (G/(J \cap H))/(H/(J \cap H))$, the finite dimension of $G/H$ implies that $dim(G/(J \cap H)) = dim(G/H) + dim(H/(J \cap H))$ is also finite. Consequently, $J \cap H$ is a finite-codimensional ideal of $G$.
   \end{proof}

   \begin{theorem}\label{f}
   Let $(G,\mu,\alpha)$ be a Hom-algebra. Then the Sweedler duality of $G$ is a Hom-coalgebra $(G^{\circ},\Delta,\alpha^{\circ})$, where $\Delta=\mu^{*}|_{G^{\circ}}$, $\alpha^{\circ}=\alpha^{*}|_{G^{\circ}}$ and $\mu^{*}: G^{*}\rightarrow (G\otimes G)^*$.
   \end{theorem}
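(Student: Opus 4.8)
The plan is to verify the four things packed into the statement: that $\Delta=\mu^{*}|_{G^{\circ}}$ actually takes values in $G^{\circ}\otimes G^{\circ}$ (and not merely in the larger space $(G\otimes G)^{*}$), that $\alpha^{\circ}=\alpha^{*}|_{G^{\circ}}$ preserves $G^{\circ}$, and then the two coalgebra axioms \eqref{hom-coassoc} and \eqref{multiplicativity_2}. The only genuinely non-trivial point is the first one: in the infinite-dimensional setting the canonical inclusion $G^{*}\otimes G^{*}\hookrightarrow(G\otimes G)^{*}$ is proper, so a priori $\mu^{*}(f)$ need not come from $G^{*}\otimes G^{*}$ at all, let alone from $G^{\circ}\otimes G^{\circ}$. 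I expect this to be the main obstacle.

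First I would establish well-definedness of $\Delta$ by factoring through a finite-dimensional quotient. Fix $f\in G^{\circ}$ and a finite-codimensional ideal $J$ with $J\subseteq\operatorname{Ker}f$. Since $J$ is an ideal it is in particular $\alpha$-invariant, so the quotient carries an induced \emph{finite-dimensional} Hom-algebra structure $(G/J,\bar\mu,\bar\alpha)$, and $f=\bar f\circ\pi$ for a unique $\bar f\in(G/J)^{*}$, where $\pi\colon G\to G/J$ is the canonical map, so $\pi^{*}(\bar f)=f$. By Lemma~\ref{s}, $((G/J)^{*},\bar\mu^{*},\bar\alpha^{*})$ is a Hom-coalgebra; in particular $\bar\mu^{*}(\bar f)\in(G/J)^{*}\otimes(G/J)^{*}$ because $G/J$ is finite-dimensional. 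A direct pairing computation against $x\otimes y$ shows $\mu^{*}\circ\pi^{*}=(\pi^{*}\otimes\pi^{*})\circ\bar\mu^{*}$, whence $\mu^{*}(f)=(\pi^{*}\otimes\pi^{*})\bigl(\bar\mu^{*}(\bar f)\bigr)$. As every element of $\pi^{*}((G/J)^{*})$ has kernel containing the finite-codimensional ideal $J$, we get $\pi^{*}((G/J)^{*})\subseteq G^{\circ}$, and therefore $\Delta(f)=\mu^{*}(f)\in G^{\circ}\otimes G^{\circ}$. This is exactly the device used classically for Sweedler's finite dual: push everything down to $G/J$, where finite-dimensionality makes $\mu^{*}$ land inside the algebraic tensor square.

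Next I would dispatch the invariance of $\alpha^{\circ}$: for $f\in G^{\circ}$ with $J\subseteq\operatorname{Ker}f$, the inclusion $\alpha(J)\subseteq J$ gives $J\subseteq\alpha^{-1}(\operatorname{Ker}f)=\operatorname{Ker}(f\circ\alpha)=\operatorname{Ker}(\alpha^{*}(f))$, so $\alpha^{*}(f)\in G^{\circ}$ and $\alpha^{\circ}$ is a well-defined endomorphism of $G^{\circ}$.

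Finally, with $\Delta$ and $\alpha^{\circ}$ now legitimately valued in $G^{\circ}$-tensors, the two coalgebra axioms follow by dualizing the two algebra axioms pointwise. Writing $\Delta(f)=\sum f_{(1)}\otimes f_{(2)}$ and pairing against $x\otimes y\otimes z$, the two composites $(\alpha^{\circ}\otimes\Delta)\circ\Delta$ and $(\Delta\otimes\alpha^{\circ})\circ\Delta$ reduce to $\langle f,\alpha(x)(yz)\rangle$ and $\langle f,(xy)\alpha(z)\rangle$, which coincide by Hom-associativity \eqref{homAssoc}, yielding \eqref{hom-coassoc}. Likewise, pairing $\Delta\circ\alpha^{\circ}$ and $(\alpha^{\circ}\otimes\alpha^{\circ})\circ\Delta$ against $x\otimes y$ gives $\langle f,\alpha(xy)\rangle$ and $\langle f,\alpha(x)\alpha(y)\rangle$, equal by multiplicativity \eqref{multiplicativity_1}, which is \eqref{multiplicativity_2}. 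The one subtlety to respect throughout these computations is that each iterated application of $\Delta$ is legitimate precisely because of the first step, which guarantees the intermediate tensors lie in $G^{\circ}\otimes G^{\circ}$ and hence pair with elements of $G\otimes G$ through the canonical identification.
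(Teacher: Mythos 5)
Your proposal is correct and follows essentially the same route as the paper: both arguments establish $\Delta(f)\in G^{\circ}\otimes G^{\circ}$ by descending to the finite-dimensional quotient $G/J$, and both verify Hom-coassociativity and comultiplicativity by the same pairing computations against $x\otimes y\otimes z$ and $x\otimes y$, invoking \eqref{homAssoc} and \eqref{multiplicativity_1}. The only organizational difference is that the paper quotients $G\otimes G$ by $J\otimes G+G\otimes J$ (via Lemmas~\ref{M} and~\ref{zz}) and expands $\overline{f\mu}$ in an explicit dual basis of $(G/J)^{*}\otimes(G/J)^{*}$, whereas you push $f$ itself down to the finite-dimensional Hom-algebra $G/J$ and invoke Lemma~\ref{s} together with the intertwining $\mu^{*}\circ\pi^{*}=(\pi^{*}\otimes\pi^{*})\circ\bar\mu^{*}$; these are two phrasings of the same reduction.
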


    \begin{proof}
    Let $f \in G^{\circ}$, we want to show that $\Delta(f) \in G^{\circ} \otimes G^{\circ}$. There exists a finite-codimensional ideal $J$ of $G$ such that $f(J) = 0$. Then we do the formal arithmetic
    \begin{align*}
        \Delta(f)(J \otimes G + G \otimes J) = f\circ\mu(J \otimes G + G \otimes J) = 0,
    \end{align*}
    since $\mu(J \otimes G)\subseteq J$ and $\mu(G \otimes J)\subseteq J$. Let a map $\pi\colon G \otimes G \to G \otimes G / (J \otimes G + G \otimes J)$, then we have $\operatorname{Ker}(\pi) = J \otimes G + G \otimes J \subseteq \operatorname{Ker}(f \mu)$. By Lemma~\ref{M}, this induces a unique morphism $\overline{f \mu}\colon G \otimes G / (J \otimes G + G \otimes J) \to \mathbb K$ satisfying $\overline{f \mu} \circ \pi = f\circ \mu$. That is to say $\overline{f \mu} \in (G \otimes G / (J \otimes G + G \otimes J))^*$.

    According to Lemma~\ref{zz}, we have $G \otimes G / (J \otimes G + G \otimes J) \cong (G/J) \otimes (G/J)$ and $G/J$ is finite, it is easy to see that
    \begin{align*}
        \overline{f\mu} \in(G/J)^* \otimes (G/J)^*.
    \end{align*}

    Let $\overline{f\mu} = \sum_{i,j=1}^n k_{ij} \overline{e_i}^* \otimes \overline{e_j}^*$, where $\overline{e_i}^* \in (G/J)^*$ is the duality base elements corresponding to a canonical map $\Tilde{\pi}\colon G \to G/J$. So for any $a,b \in G$, we have
    \begin{align*}
    \langle f\circ\mu, a \otimes b \rangle &= \langle \overline{f\mu} \circ \pi, a \otimes b \rangle \\
    &= \langle \overline{f\mu} , \pi(a \otimes b) \rangle \\
    &= \langle \overline{f\mu}, \Tilde{\pi}(a) \otimes \Tilde{\pi}(b) \rangle \\
    &= \sum_{i,j=1}^n k_{ij}\langle \overline{e}_i^*, \Tilde{\pi}(a) \rangle \langle \overline{e}_j^*, \Tilde{\pi}(b) \rangle.
    \end{align*}

    Denote $e_i^* = \overline{e}_i^* \circ \Tilde{\pi}$. Because of $\operatorname{Ker} (e_i^*) \supseteq J$, we have $e_i^* \in G^{\circ}$. It is straightforward that
    \begin{align*}
        \Delta(f) = f\circ \mu = \sum\limits_{i,j=1}^n k_{ij} e_i^* \otimes e_j^* .
    \end{align*}
    Therefore, we conclude that $\Delta(f) \in G^{\circ} \otimes G^{\circ}$.

    For any $g \in G^{\circ}$, there exists a finite-codimensional ideal $J$ of $G$ such that $\operatorname{Ker}(g) \supseteq J$. Then, since $\alpha(J)\subseteq J$ and $g(J)=0$, we have
    \begin{align*}
        \langle \alpha^{\circ}(g), J \rangle =\langle \alpha^{*}(g), J \rangle= \langle g, \alpha(J) \rangle= 0.
    \end{align*}
    Hence $\operatorname{Ker}(\alpha^{\circ}(g)) \supseteq J$ follows, which implies that $\alpha^{\circ}(g)\in G^\circ$. 
    
    Using the equation \eqref{multiplicativity_1}, for any $c^*\in G^{\circ}$ and $x,y\in G$, we have
    \begin{align*}
        \langle \Delta\circ\alpha^{\circ}(c^*), x\otimes y\rangle&=\langle c^*, \alpha\circ\mu(x\otimes y)\rangle\\
        &=\langle c^*, \mu\circ(\alpha\otimes\alpha)(x\otimes y)\rangle\\
        &=\langle\Delta(c^*), \alpha(x)\otimes \alpha(y)\rangle\\
        &=\sum_{(c^*)}\langle c^*_{(1)}\otimes c^*_{(2)}, \alpha(x)\otimes \alpha(y)\rangle\\
        &=\sum_{(c^*)}\langle \alpha^{\circ}(c^*_{(1)}), x\rangle\langle \alpha^{\circ}(c^*_{(2)}), y\rangle\\
        &=\langle (\alpha^{\circ}\otimes \alpha^{\circ})(\sum_{(c^*)} c^*_{(1)}\otimes c^*_{(2)}), x\otimes y\rangle\\
        &=\langle (\alpha^{\circ}\otimes \alpha^{\circ})\circ\Delta(c^*), x\otimes y\rangle.
    \end{align*}
    Equivalently speaking, we have $\Delta\circ\alpha^{\circ}=(\alpha^{\circ}\otimes \alpha^{\circ})\circ\Delta$.\par
    
    It is sufficient to verify that
    $(\alpha^{\circ}\otimes \Delta)\circ\Delta=(\Delta \otimes \alpha^{\circ})\circ\Delta.$
     For any $c^{*}\in G^{\circ}$ and $x,y,z\in G$, we have
    \begin{align*}
  \langle (\alpha^{\circ} \otimes \Delta) \circ \Delta(c^*), x \otimes y \otimes z \rangle
  &= \langle (\alpha^{\circ} \otimes \Delta) \circ (\sum_{(c^*)} c^*_{(1)} \otimes c^*_{(2)}) , x \otimes y \otimes z \rangle \\
  &= \sum_{(c^*)} \langle \alpha^{\circ}(c^*_{(1)}) \otimes \Delta(c^*_{(2)}), x \otimes y \otimes z \rangle \\
  &=\sum_{(c^*)} \langle \alpha^{\circ}(c^*_{(1)}), x\rangle\langle\Delta(c^*_{(2)}),  y \otimes z \rangle \\
  &= \sum_{(c^*)} \langle c^*_{(1)}, \alpha(x) \rangle \langle c^*_{(2)}, yz \rangle \\
  &=\sum_{(c^*)} \langle c^*_{(1)}\otimes c^*_{(2)}, \alpha(x)\otimes yz\rangle\\
  &=\langle \Delta(c^*), \alpha(x)\otimes yz\rangle\\
  &= \langle c^*, \alpha(x) yz \rangle.
\end{align*}
    Similarly,
    \begin{align*}
  \langle (\Delta \otimes \alpha^{\circ}) \circ \Delta(c^*), x \otimes y \otimes z \rangle &= \langle c^*, x y \alpha(z) \rangle.
\end{align*}

    Due to the Hom-associativity (\ref{homAssoc}) of $G$, we have $\alpha(x)yz=xy\alpha(z)$. Thus $(\alpha^{\circ}\otimes \Delta)\circ\Delta=(\Delta \otimes \alpha^{\circ})\circ\Delta$. 
\end{proof}

Without specific instructions, we will examine the properties of the Hom-coalgebra $(G^{\circ},\Delta,\alpha^{\circ})$, Sweedler duality of $G$ as described in Theorem~\ref{f}. Based on these considerations, we then derive the morphism of Hom-coalgebras accordingly.

\begin{corollary}\label{f1}
    Let $(G,\mu,\alpha)$ and $(G',\mu',\alpha')$ be two Hom-algebras, $(G^{\circ},\Delta,\alpha^{\circ})$ and $(G'^{\circ},\Delta',\alpha'^{\circ})$ two corresponding objects under the Sweedler duality. The map $f: G\rightarrow G'$ is a morphism of Hom-algebras. Note that the map $f^{\circ} = f^{*}|_{G'^{\circ}}:G'^{\circ}\rightarrow G^{*}$. Thus, $f^{\circ}$ is a morphism of Hom-coalgebras.
\end{corollary}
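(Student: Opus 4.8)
The plan is to establish the three defining properties of a Hom-coalgebra morphism for $f^{\circ}$: that it genuinely maps into $G^{\circ}$ (and not merely into $G^{*}$), that it intertwines the twisting maps $\alpha'^{\circ}$ and $\alpha^{\circ}$, and that it intertwines the comultiplications $\Delta'$ and $\Delta$. Since everything is a dualization, each property will be checked by pairing against arbitrary elements of $G$ or $G\otimes G$ and transferring the computation to $G'$ through the defining relation $\langle f^{*}(\xi),x\rangle=\langle\xi,f(x)\rangle$. This mirrors the finite-dimensional statement of Lemma~\ref{qq}, but now the Sweedler-duality machinery is what keeps the construction well-defined in the infinite-dimensional setting.

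First I would show $f^{*}(G'^{\circ})\subseteq G^{\circ}$, which is exactly what makes $f^{\circ}$ a legitimate map of Sweedler duals and thus a candidate morphism of Hom-coalgebras. Given $\xi\in G'^{\circ}$, fix a finite-codimensional ideal $J\subseteq G'$ with $\operatorname{Ker}\xi\supseteq J$. The natural finite-codimensional ideal annihilated by $f^{*}(\xi)$ is the complete inversion $f^{-1}(J)$: for $x\in f^{-1}(J)$ one has $\langle f^{*}(\xi),x\rangle=\langle\xi,f(x)\rangle=0$ because $f(x)\in J\subseteq\operatorname{Ker}\xi$, so $\operatorname{Ker}(f^{*}(\xi))\supseteq f^{-1}(J)$. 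By Lemma~\ref{b}, $f^{-1}(J)$ is a finite-codimensional ideal of $G$, whence $f^{*}(\xi)\in G^{\circ}$. This is the one step that truly requires the Sweedler framework rather than a bare dualization, and I expect it to be the crux of the argument; the remaining two properties are then formal.

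Next I would verify compatibility with the twisting maps. For $\xi\in G'^{\circ}$ and $x\in G$, pairing gives $\langle f^{\circ}(\alpha'^{\circ}(\xi)),x\rangle=\langle\xi,\alpha'(f(x))\rangle$, and the morphism condition $f\circ\alpha=\alpha'\circ f$ from \eqref{s1} rewrites this as $\langle\xi,f(\alpha(x))\rangle=\langle\alpha^{\circ}(f^{\circ}(\xi)),x\rangle$; as $x$ is arbitrary, $f^{\circ}\circ\alpha'^{\circ}=\alpha^{\circ}\circ f^{\circ}$. Finally, for compatibility with comultiplication I would pair against $x\otimes y\in G\otimes G$: using $\Delta=\mu^{*}|_{G^{\circ}}$ together with the multiplicativity $f(xy)=f(x)f(y)$, one gets $\langle\Delta(f^{\circ}(\xi)),x\otimes y\rangle=\langle\xi,f(x)f(y)\rangle=\langle\Delta'(\xi),f(x)\otimes f(y)\rangle$, and expanding $\Delta'(\xi)=\sum_{(\xi)}\xi_{(1)}\otimes\xi_{(2)}$ in Sweedler notation turns the right-hand side into $\langle(f^{\circ}\otimes f^{\circ})\Delta'(\xi),x\otimes y\rangle$, giving $\Delta\circ f^{\circ}=(f^{\circ}\otimes f^{\circ})\circ\Delta'$. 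With the image landing in $G^{\circ}$ and both intertwining relations in hand, $f^{\circ}$ is a morphism of Hom-coalgebras.
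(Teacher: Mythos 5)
Your proposal is correct and follows essentially the same route as the paper's proof: establish $f^{*}(G'^{\circ})\subseteq G^{\circ}$ via Lemma~\ref{b} applied to the complete inversion $f^{-1}(J)$, then verify the two intertwining relations by pairing against elements of $G$ and $G\otimes G$ and invoking the morphism conditions \eqref{s1}. The only difference is the (immaterial) order in which the two intertwining identities are checked.
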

\begin{proof}
    Our primary goal is to show that \(f^{\circ}(G'^{\circ}) \subseteq G^{\circ}\). For any \(b^{*} \in G'^{\circ}\), since \(J\) is a finite-codimensional ideal of \(G'\) and lies in \(\operatorname{Ker}(b^{*})\), by using Lemma~\ref{b}, we can deduce that \(f^{-1}(J)\) constitutes a finite-codimensional ideal of the Hom-algebra \(G\).

    Notice that
    \begin{align*}
        \langle f^{\circ}(b^{*}), f^{-1}(J) \rangle  = \langle b^{*}, f\circ f^{-1}(J) \rangle 
                          = \langle b^{*}, J \rangle 
                          = 0,
    \end{align*}
    which shows that $\operatorname{Ker}(f^{\circ}(b^{*})) \supseteq f^{-1}(J)$. In other words, $f^{\circ}(b^{*}) \in G^{\circ}$. Consequently, $f^{\circ}(G'^{\circ}) \subseteq G^{\circ}$.

    Using the equation (\ref{s1}), for any $b^{*} \in G^{\prime \circ}$ and $a, c \in G^{\circ}$, we have
    \begin{align*}
        \langle \Delta\circ f^{\circ}(b^{*}), a \otimes c \rangle &= \langle b^{*}, f\circ \mu(a \otimes c) \rangle \\
        &= \langle b^{*}, \mu^{\prime}\circ(f \otimes f)(a \otimes c) \rangle \\
        &= \langle  \Delta'(b^{*}), f(a) \otimes f(c) \rangle \\
        &=\sum_{(b^*)}\langle  b^*_{(1)}\otimes  b^*_{(2)}, f(a) \otimes f(c) \rangle \\
        &=\sum_{(b^*)}\langle  b^*_{(1)}, f(a)\rangle\langle  b^*_{(2)}, f(c) \rangle \\
        &=\sum_{(b^*)}\langle  f^{\circ} (b^*_{(1)}), a\rangle\langle  f^{\circ}(b^*_{(2)}), c \rangle \\
        &=\langle (f^{\circ} \otimes f^{\circ})(\sum_{(b^*)} b^*_{(1)}\otimes  b^*_{(2)}), a \otimes c \rangle\\
        &= \langle (f^{\circ} \otimes f^{\circ})\circ \Delta'(b^{*}), a \otimes c \rangle.
    \end{align*}
It means we obtain that $\Delta\circ f^{\circ} = (f^{\circ} \otimes f^{\circ})\circ \Delta'$.

    For any $b^* \in G'^{\circ}$ and $a \in G$, we have
\begin{align*}
    \langle \alpha^{\circ}\circ f^{\circ}(b^*), a \rangle = \langle b^*, f\circ \alpha(a) \rangle = \langle b^*, \alpha'\circ f(a) \rangle
    = \langle f^{\circ}\circ \alpha'^{\circ}(b^*), a \rangle.
\end{align*}
Therefore, it follows that $\alpha^{\circ}\circ f^{\circ} = f^{\circ}\circ \alpha'^{\circ}$, which is the desired result.
\end{proof}

Using the Sweedler duality that we constructed, ordinary Hom-associative algebras naturally get an infinite-dimensional correspondence. We present two as examples in the following.

\begin{example}[Polynomial Hom-algebra]
Let $\mathbb K$ be a field and we consider the Hom-algebra $(\mathbb K[x],\cdot,\alpha)$ where $\mathbb K[x]$ is the set of the polynomials, $\alpha$ is a linear map acting on the indeterminate $x$ with the property $\alpha(x^m)=\alpha(x)^m$ for any $m\in N$ and the bilinear operator $\cdot:\mathbb K[x]\rightarrow \mathbb K[x]$ with the notation 
\[
  x^{m}\cdot x^{n}=\alpha(x^m)\alpha(x^n)=\alpha(x)^{\,m+n}\quad (m,n\ge 0).
\]
A linear functional $\varphi \in A^{*}=\operatorname{Hom}_{\mathbb K}(\mathbb K[x],\mathbb K)$ lies in the \emph{Sweedler duality}
\[
  A^{\circ}
  \;=\;
  \bigl\{\varphi\in A^{*}\mid
         \exists\, N\ge 0 ~|~
         \varphi\!\bigl((x^{N+1})\bigr)=0
     \bigr\}
\]
iff it annihilates some finite-codimensional ideal $(x^{N+1})$.  
Hence $(A^{\circ},\Delta,\mu)$ is the Hom-coalgebra induced by the Sweedler duality where the coordinate functionals span the map $\mu$
\[
  \mu(d_n)(x^m)=d_{n}(\alpha(x^{m})) =d_n(\alpha(x)^m)= \delta_{n,m},
  \qquad n,m\ge 0,
\]
dualising the multiplication $\cdot\colon \mathbb K[x]\!\otimes\!\mathbb K[x]\to\mathbb K[x]$ yields a comultiplication
\[
  \Delta(d_{n})
  \;=\;
  \sum_{i=0}^{n} \mu(d_{i})\otimes \mu(d_{\,n-i}),
  \qquad n\ge 0.
\]
\end{example}

\begin{example}[Tensor Hom-algebra $T(V)$ on a finite-dimensional vector space]
Let $V$ be a finite-dimensional vector space over a field $\mathbb K$. Its tensor algebra
\[
T(V)=\mathbb K\oplus\bigoplus_{i=1}^{\infty}V^{\otimes i}=\mathbb K \;\oplus\; V \;\oplus\; (V^{\otimes 2}) \;\oplus\;\cdots
\]
is a Hom-algebra $(T(V),\cdot,\alpha)$ where the linear map $\alpha:T(V)\rightarrow T(V)$ with the notation $\alpha(V^{\otimes m})=\alpha(V)^{\otimes m}$and the multiplication $\cdot:T(V)\otimes T(V)\rightarrow T(V)$ are given by \emph{tensor Hom-concatenation} such that for any $a,b\in \mathbb K$ and $A\in V^{\otimes p},B\in V^{\otimes q},C\in V^{\otimes r}$,
\begin{align*}
    &A\cdot B=\alpha(A)\otimes \alpha(B),\\
    &(aA+bB)\cdot C=a(A\cdot C)+b(B\cdot C).
\end{align*}

For any word $w = k\otimes v_{1}\otimes\cdots\otimes v_{n}\,(v_{i}\in V)$
write $|w|=n+1$.
Define the evaluation functionals
\[
  d_{w}\colon T(V) \longrightarrow \mathbb K,
  \qquad
  d_{w}(w') = \delta_{w,w'},
\]
which clearly annihilate the finite-codimensional ideal
\(
  I_{n} = \bigoplus_{m>n} V^{\otimes m}.
\)
These $d_{w}$ form a basis of the Sweedler duality $(A^{\circ},\Delta,\gamma)$ where the map $\gamma$ with the calculation 
\[
\gamma(d_w)(aA+bB)=d_w(a\alpha(A)+b\alpha(B))=a\delta_{w,p}+b\delta_{w,q},
\]
dualising the multiplication $\cdot:T(V)\otimes T(V)\rightarrow T(V)$ again,
\[
  \Delta(d_{w})
  \;=\;
  \sum_{uv = w} \mu(d_{u})\otimes \mu(d_{v}),
\]
where the sum runs over every decomposition of $w$ into a prefix $u$ and a suffix $v$. 
\end{example}

 \section{Sweedler duality for Hom-modules}\label{sect3}
        In this section, we will introduce the structure of Hom-(co)modules and the morphisms of Hom-(co)modules (see definition 4.1 in~\cite{DY}). Based on these structures, the Sweedler duality of Hom-(co)modules will also be treated. In subsequent discourse, we utilize $\cdot$ to represent the algebraic subset Cartesian product, maintaining equivalence to unsigned operations, thereby clarifying operational sequencing in intricate computations, such as $M\cdot J=MJ$.

    \begin{definition}
        Let $(G, \mu, \alpha)$ be a Hom-algebra. A right Hom-module of $G$ is a triple $(M, \psi, \gamma)$ where $M$ is a linear space, a map $\psi: M \otimes G \rightarrow M$ with $\psi(m\otimes g)=m\cdot g$ and a map $\gamma: M \rightarrow M$ satisfying:
        \begin{align*}
            \psi\circ(\psi\otimes\alpha)&=\psi\circ(\gamma\otimes\mu),\quad \text{(Hom-associativity)}\\
        \psi\circ(\gamma\otimes\alpha)&=\gamma\circ\psi.\quad \quad \quad \quad
        \text{(multiplicativity)}
        \end{align*}
        
    \end{definition}

    We will refer to a right Hom-module $(M,\psi,\gamma)$ as $M$.
     Let $(M,\psi,\gamma)$ and $(N,\Tilde{\psi},\Tilde{\gamma})$ be two right Hom-modules of $G$. A map $\sigma: M \rightarrow N$ is said to be a morphism of right Hom-modules when the following conditions hold:
     \begin{align}\label{s7}
    \sigma\circ \psi = \Tilde{\psi} \circ (\sigma\otimes\alpha), \quad \quad
    \Tilde{\gamma} \circ \sigma = \sigma \circ \gamma.
       \end{align}

\begin{definition}
    Let $(C, \Delta, \beta)$ be a Hom-coalgebra. A right Hom-comodule of $C$ is a triple $(A, \phi, \epsilon)$ where $A$ is a linear space, a map $\phi: A \rightarrow A\otimes C$ with $\psi(a)=\sum_{(a)}a_{(0)}\otimes a_{(1)}$ and a map $\epsilon: C \rightarrow C$ satisfying:
    \begin{align*}
     (\phi\otimes\beta)\circ\phi&=(\epsilon\otimes\Delta)\circ\phi,\quad \text{(Hom-coassociativity)}\\
    (\epsilon\otimes\beta)\circ\phi&=\phi\circ\epsilon.\quad \quad \quad \quad \quad \text{(comultiplicativity)}    
    \end{align*}
    
\end{definition}
    We will refer to a right Hom-comodule $(A, \phi, \epsilon)$ as $A$. Let $(A, \phi, \epsilon)$ and $(A', \phi', \epsilon')$ be two right Hom-comodule of $C$. A map $f:A\rightarrow A'$ is called a morphism of right Hom-comodules if it satisfies:
    $$
    \phi'\circ f= (f\otimes \beta)\circ\phi,
    \quad \quad
    \epsilon'\circ f=f\circ\epsilon.
    $$

\begin{definition}
    The \textit{Sweedler duality} of the right Hom-module $M$ based on the ordinary duality $M^*$ is defined by
    $$M^{\circ} := \{ f \in M^{*} \mid f(M\cdot I) = 0, I \text{ is a  finite-codimensional ideal of  G  } \}.$$
\end{definition}

Then we will give the isomorphism in dual spaces (see~\cite{LGL})  and
the structure of Hom-comodule induced by the universal duality as follows.
\begin{lemma}\label{iso}
    Let $U$ and $V$ be two linear spaces. Denote $\rho:U^*\otimes V^*\rightarrow (U\otimes V)^*$ with corresponding rule $\rho(f\otimes g)(x\otimes y)=f(x)g(y)$, for any $f\in U^*$, $g\in V^*$, $x\in U$ and $y\in V$. If $\operatorname{dim} U<+\infty$ or $\operatorname{dim} V<+\infty$, $\rho$ is an isomorphism of linear spaces.
\end{lemma}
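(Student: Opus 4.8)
The plan is to prove that the map
$$\rho\colon U^{*}\otimes V^{*}\longrightarrow (U\otimes V)^{*},\qquad \rho(f\otimes g)(x\otimes y)=f(x)g(y),$$
is a linear isomorphism under the hypothesis that at least one of $U,V$ is finite-dimensional. First I would verify that $\rho$ is well defined and linear: the assignment $(f,g)\mapsto\bigl(x\otimes y\mapsto f(x)g(y)\bigr)$ is bilinear in $(f,g)$, hence by the universal property of the tensor product it factors through $U^{*}\otimes V^{*}$ and yields a well-defined linear map $\rho$. This step is routine and I would dispatch it quickly.

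The substantive work is to show $\rho$ is bijective, and here I would split into injectivity and surjectivity, using the finite-dimensionality hypothesis. Without loss of generality suppose $\dim V<+\infty$ (the case $\dim U<+\infty$ is symmetric, swapping the roles of the two factors). Fix a basis $\{v_{1},\dots,v_{k}\}$ of $V$ with dual basis $\{v_{1}^{*},\dots,v_{k}^{*}\}$ of $V^{*}$. For injectivity, take a general element $\omega=\sum_{j} f_{j}\otimes v_{j}^{*}\in U^{*}\otimes V^{*}$, where each $f_{j}\in U^{*}$ is uniquely determined since the $v_{j}^{*}$ form a basis of $V^{*}$. If $\rho(\omega)=0$, then evaluating on $x\otimes v_{i}$ gives $\sum_{j} f_{j}(x)\,v_{j}^{*}(v_{i})=f_{i}(x)=0$ for every $x\in U$ and every $i$, so each $f_{i}=0$ and hence $\omega=0$. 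For surjectivity, given any $\xi\in(U\otimes V)^{*}$, I would define $f_{j}\in U^{*}$ by $f_{j}(x):=\xi(x\otimes v_{j})$ and set $\omega:=\sum_{j} f_{j}\otimes v_{j}^{*}$; a direct check on the spanning set $\{x\otimes v_{i}\}$ shows $\rho(\omega)(x\otimes v_{i})=f_{i}(x)=\xi(x\otimes v_{i})$, so $\rho(\omega)=\xi$ by linearity.

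The main obstacle, and the precise place where finite-dimensionality is essential, is that the decomposition $\omega=\sum_{j} f_{j}\otimes v_{j}^{*}$ must be a \emph{finite} sum indexed by a genuine basis of $V^{*}$; this is exactly what finite-dimensionality of $V$ guarantees, since then $\{v_{j}^{*}\}$ is a basis of $V^{*}$ rather than merely a linearly independent family spanning a proper subspace. In the infinite-dimensional case $\rho$ is still injective but fails to be surjective, because an arbitrary functional on $U\otimes V$ need not arise from a finite tensor combination; I would remark that our hypothesis circumvents this obstruction entirely.

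Since everything reduces to finite linear bookkeeping once a basis of the finite-dimensional factor is fixed, I expect no genuine difficulty beyond keeping track of the dual-basis identities $v_{j}^{*}(v_{i})=\delta_{ij}$, and the whole argument is comfortably within the scope of assuming the elementary linear-algebra facts used freely elsewhere in the paper.
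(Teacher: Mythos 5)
Your argument is correct and complete: the reduction to a finite dual basis of $V$, the unique decomposition $\omega=\sum_j f_j\otimes v_j^*$, and the injectivity/surjectivity checks on the spanning set $\{x\otimes v_i\}$ are all sound, and you correctly identify where finite-dimensionality is used. Note, however, that the paper itself gives no proof of Lemma~\ref{iso} --- it is simply quoted from \cite{LGL} --- so there is no in-paper argument to compare against; your standard dual-basis proof supplies exactly the routine verification the authors chose to outsource.
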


\begin{lemma} \label{q}
    Let $(G, \mu, \alpha)$ be a finite-dimensional Hom-algebra and $(M, \psi, \gamma)$ be a right Hom-module of $G$. Then, $(M^{*}, \psi^{*}, \gamma^{*})$ is a right Hom-comodule of a Hom-coalgebra $(G^{*}, \mu^{*}, \alpha^{*})$, where $\psi^{*}: M^{*} \rightarrow M^{*} \otimes G^{*}$ and $\gamma^{*}:M^*\rightarrow M^*$.
\end{lemma}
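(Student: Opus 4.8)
The plan is to verify the two right Hom-comodule axioms for the triple $(M^{*},\psi^{*},\gamma^{*})$ over the Hom-coalgebra $(G^{*},\mu^{*},\alpha^{*})$, the latter being a Hom-coalgebra by Lemma~\ref{s} since $G$ is finite-dimensional. Before checking the axioms, the first---and genuinely essential---point is to confirm that the coaction really takes values in $M^{*}\otimes G^{*}$. A priori $\psi^{*}$ only lands in $(M\otimes G)^{*}$, but because $\dim G<+\infty$, Lemma~\ref{iso} gives the identification $(M\otimes G)^{*}\cong M^{*}\otimes G^{*}$, so we may legitimately regard $\psi^{*}\colon M^{*}\to M^{*}\otimes G^{*}$. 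Applying the same lemma twice (using $\dim(G\otimes G)<+\infty$) furnishes $(M\otimes G\otimes G)^{*}\cong M^{*}\otimes G^{*}\otimes G^{*}$, which is exactly the ambient space in which the Hom-coassociativity identity lives.

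Once these identifications are fixed, each comodule axiom is precisely the transpose of the corresponding module axiom, and I would verify it by pairing both sides against a general tensor and pushing every dual map onto the module side through the defining relations $\langle \psi^{*}(\xi),m\otimes g\rangle=\langle \xi,m\cdot g\rangle$, $\langle \mu^{*}(\phi),g\otimes h\rangle=\langle \phi,gh\rangle$, $\langle \alpha^{*}(\phi),g\rangle=\langle \phi,\alpha(g)\rangle$ and $\langle \gamma^{*}(\xi),m\rangle=\langle \xi,\gamma(m)\rangle$. For Hom-coassociativity I would evaluate $(\psi^{*}\otimes\alpha^{*})\circ\psi^{*}$ and $(\gamma^{*}\otimes\mu^{*})\circ\psi^{*}$ on an arbitrary $m\otimes g\otimes h$; the two sides collapse to $\langle \xi,(m\cdot g)\cdot\alpha(h)\rangle$ and $\langle \xi,\gamma(m)\cdot(gh)\rangle$ respectively, which coincide by the module Hom-associativity $\psi\circ(\psi\otimes\alpha)=\psi\circ(\gamma\otimes\mu)$. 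For comultiplicativity I would evaluate $(\gamma^{*}\otimes\alpha^{*})\circ\psi^{*}$ and $\psi^{*}\circ\gamma^{*}$ on $m\otimes g$, obtaining $\langle \xi,\gamma(m)\cdot\alpha(g)\rangle$ and $\langle \xi,\gamma(m\cdot g)\rangle$, which agree by the module multiplicativity $\psi\circ(\gamma\otimes\alpha)=\gamma\circ\psi$.

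I expect the only real obstacle to be the tensor-product identification itself: everything downstream is a mechanical \emph{transpose-and-invoke-the-module-identity} computation, but it is meaningful precisely because the finite-dimensionality of $G$ (through Lemma~\ref{iso}) lets the dual of a tensor product split as the tensor product of duals, and hence lets the composite dual maps $(\psi^{*}\otimes\alpha^{*})\circ\psi^{*}$, $(\gamma^{*}\otimes\mu^{*})\circ\psi^{*}$, and so on, actually be formed on $M^{*}\otimes G^{*}\otimes G^{*}$. In the infinite-dimensional setting this splitting fails, which is exactly the reason the Sweedler duality $M^{\circ}$ is introduced in the next definition; here the hypothesis $\dim G<+\infty$ makes the naive dual work verbatim.
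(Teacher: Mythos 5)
Your proposal is correct and follows essentially the same route as the paper: invoke Lemma~\ref{s} for the Hom-coalgebra structure on $G^{*}$, then verify the two comodule axioms by pairing against arbitrary tensors and reducing to the module identities $\psi\circ(\psi\otimes\alpha)=\psi\circ(\gamma\otimes\mu)$ and $\gamma\circ\psi=\psi\circ(\gamma\otimes\alpha)$. Your explicit justification via Lemma~\ref{iso} that $\psi^{*}$ genuinely lands in $M^{*}\otimes G^{*}$ (and that the triple dual splits) is a point the paper leaves implicit, so it is a small gain in rigor rather than a different argument.
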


\begin{proof}
    Applying Lemma~\ref{s}, we can find $(G^{*}, \mu^{*}, \alpha^{*})$ is a Hom-coalgebra. 
    For any $m^*\in M^*$, $m\in M$ and $g\in G$, we have
    \begin{align*}
        \langle \psi^*\circ\gamma^*(m^*), m\otimes g\rangle&=\langle m^*, \gamma\circ\psi(m\otimes g)\rangle\\
        &=\langle m^*, \psi\circ(\gamma\otimes \alpha)(m\otimes g)\rangle\\
        &=\langle \psi^*(m^*), \gamma(m)\otimes \alpha(g)\rangle\\
        &=\sum_{(m^*)}\langle m^*_{(0)}\otimes m^*_{(1)}, \gamma(m)\otimes \alpha(g)\rangle\\
        &=\sum_{(m^*)}\langle \gamma^*(m^*_{(0)}), m\rangle\langle \alpha^*(m^*_{(1)}), g\rangle\\
        &=\sum_{(m^*)}\langle (\gamma^*\otimes \alpha^*)(m^*_{(0)}\otimes m^*_{(1)}), m\otimes g\rangle\\
        &=\langle (\gamma^*\otimes \alpha^*)\circ\psi^*(m^*),  m\otimes g\rangle.
    \end{align*}
    Therefore we have $\psi^*\circ\gamma^*=(\gamma^*\otimes \alpha^*)\circ\psi^*$.

    Then it is sufficient for us to check that $(\psi^*\otimes\alpha^*)\circ\psi^*=(\gamma^*\otimes\mu^*)\circ\psi^*$. For any $a^*\in M^*$ and $x\in M, y,z\in G$,
    \begin{align*}
        \langle(\psi^*\otimes\alpha^*)\circ\psi^*(a^*), x\otimes y\otimes z\rangle&=\langle(\psi^*\otimes\alpha^*)(\sum_{(a^*)}a^*_{(0)}\otimes a^*_{(1)}), x\otimes y\otimes z\rangle\\
        &=\sum_{(a^*)}\langle \psi^*(a^*_{(0)}), x\otimes y \rangle\langle \alpha^*(a^*_{(1)}), z \rangle\\
        &=\sum_{(a^*)}\langle a^*_{(0)}, \psi(x\otimes y) \rangle\langle a^*_{(1)}, \alpha(z)\rangle\\
        &=\langle \psi^*(a^*), (\psi\otimes\alpha)(x\otimes y\otimes z)\rangle\\
        &=\langle a^*, \psi\circ(\psi\otimes\alpha)(x\otimes y\otimes z)\rangle.
    \end{align*}
    Similarly, we have $\langle(\gamma^*\otimes\mu^*)\circ\psi^*(a^*), x\otimes y\otimes z\rangle=\langle a^*, \psi\circ(\gamma\otimes\mu)(x\otimes y\otimes z)\rangle$. According to the equation~\eqref{s7}, It can be shown that $(\psi^*\otimes\alpha^*)\circ\psi^*=(\gamma^*\otimes\mu^*)\circ\psi^*$. This completes the proof of the lemma.
\end{proof}

Now, we will construct the structure of the Hom-comodule via the Sweedler duality procedure.

\begin{theorem}\label{c'}
    Let $G$ be a Hom-algebra and $M$ be a Hom-module. Then $M^{\circ}$ is the subspace of $M^*$.
\end{theorem}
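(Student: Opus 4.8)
The plan is to mirror the argument of Theorem~\ref{c}, replacing the kernel condition used there with the annihilation condition on $M\cdot I$ that defines $M^{\circ}$. First I would dispose of closure under $\mathbb K$-multiplication: if $f\in M^{\circ}$ satisfies $f(M\cdot I)=0$ for some finite-codimensional ideal $I$ of $G$, then for any scalar $k\in\mathbb K$ the functional $kf$ annihilates the same subspace $M\cdot I$, so $kf\in M^{\circ}$. This is immediate and needs no new ideal.

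The substantive step is closure under addition. Given $f,g\in M^{\circ}$, I would choose finite-codimensional ideals $I$ and $J$ of $G$ with $f(M\cdot I)=0$ and $g(M\cdot J)=0$, and propose $I\cap J$ as the witnessing ideal for $f+g$. The key elementary observation is the inclusion $M\cdot(I\cap J)\subseteq (M\cdot I)\cap(M\cdot J)$, which follows at once from the definition of the action: any generator $\psi(m\otimes k)$ with $k\in I\cap J$ lies simultaneously in $M\cdot I$ (since $k\in I$) and in $M\cdot J$ (since $k\in J$), and one extends to finite sums by linearity. Consequently $f+g$ annihilates $M\cdot(I\cap J)$, because $f$ kills the first factor and $g$ kills the second.

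It then remains only to confirm that $I\cap J$ is itself a finite-codimensional ideal of $G$, but this is exactly the content already established inside the proof of Theorem~\ref{c}: the intersection of two ideals is an ideal, and the finite-codimensionality follows from the isomorphisms $J/(I\cap J)\cong (I+J)/I\subseteq G/I$ together with $G/J\cong\bigl(G/(I\cap J)\bigr)\big/\bigl(J/(I\cap J)\bigr)$, so that $\dim\bigl(G/(I\cap J)\bigr)$ is finite. Invoking that result directly, $I\cap J$ is finite-codimensional, hence $f+g\in M^{\circ}$, and $M^{\circ}$ is a subspace of $M^*$. The only point requiring care is this finite-codimensionality of $I\cap J$; since it was proved for Theorem~\ref{c} and the ambient ideals here are again ideals of $G$, no genuinely new obstacle arises, and the remaining verifications are purely formal.
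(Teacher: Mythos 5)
Your proposal is correct and follows essentially the same route as the paper: both arguments take the intersection $I\cap J$ as the witnessing ideal, invoke the finite-codimensionality of $I\cap J$ already established in Theorem~\ref{c}, and conclude that $f+g$ annihilates $M\cdot(I\cap J)$. Your explicit justification of the inclusion $M\cdot(I\cap J)\subseteq (M\cdot I)\cap(M\cdot J)$ is a detail the paper leaves implicit, but it is the same argument.
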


\begin{proof}
    Firstly, it can be observed that $M^{\circ}$ is closed under multiplication. So it is sufficient for us to show that $M^{\circ}$ is closed under addition.

    Let $f, g \in M^{\circ}$, there exist finite-codimensional ideals $I$ and $J$ of $G$ such that $f(M\cdot I) = 0$ and $g(M\cdot J) = 0$. By Theorem \ref{c}, we have $I \cap J$ is an ideal of $G$ and $G/(I \cap J)$ is finite-dimensional. Then, we have
        $(f+g)(M\cdot (I\cap J)) = 0$.
      And we may conclude that $f+g \in M^{\circ}$, thereby completing the proof.
\end{proof}

\begin{theorem} \label{f2}
    Let $(M,\psi,\gamma)$ be a right Hom-module of a Hom-algebra $(G,\mu,\alpha)$ where $\alpha$ is an automorphism map of $G$, then under the Sweedler duality constrcution,  $(M^{\circ}, \psi^{\circ},\gamma^{\circ})$ is a right Hom-comodule of $G^{\circ}$ where $\psi^{\circ}=\psi^*|_{M^{\circ}}$, $\gamma^{\circ}=\gamma^*|_{M^{\circ}}$ and $\psi^{*}:M^{*}\rightarrow (M\otimes G)^*$.
\end{theorem}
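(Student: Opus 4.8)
The plan is to follow the same template as Theorem~\ref{f} and its finite-dimensional module analogue Lemma~\ref{q}: first invoke Theorem~\ref{f} to know that $(G^{\circ},\Delta,\alpha^{\circ})$ is a genuine Hom-coalgebra, then check that the restricted dual maps $\psi^{\circ}=\psi^{*}|_{M^{\circ}}$ and $\gamma^{\circ}=\gamma^{*}|_{M^{\circ}}$ actually take values in the correct spaces, and finally verify the two Hom-comodule axioms by dualizing the module axioms through the pairing $\langle\cdot,\cdot\rangle$. The genuinely new content --- and the only place the automorphism hypothesis on $\alpha$ is used --- is the well-definedness of $\psi^{\circ}$, namely the claim that $\psi^{\circ}(M^{\circ})\subseteq M^{\circ}\otimes G^{\circ}$; everything else is either quoted from earlier results or a routine transcription of the computation in Lemma~\ref{q}.

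For the closure of $\psi^{\circ}$, fix $f\in M^{\circ}$ and a finite-codimensional ideal $I$ of $G$ with $f(M\cdot I)=0$. First I would observe that $\psi^{*}(f)$ annihilates $M\otimes I$, since $\langle\psi^{*}(f),m\otimes g\rangle=\langle f,m\cdot g\rangle=0$ whenever $g\in I$. Hence $\psi^{*}(f)$ factors through $(M\otimes G)/(M\otimes I)\cong M\otimes(G/I)$ (the identification being the special case $I=0$ of Lemma~\ref{zz}), and because $\dim(G/I)<+\infty$ Lemma~\ref{iso} identifies $\bigl(M\otimes(G/I)\bigr)^{*}$ with $M^{*}\otimes(G/I)^{*}$. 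Choosing the $(G/I)^{*}$-components to be linearly independent and pulling them back along the quotient map $G\to G/I$, I can therefore write $\psi^{*}(f)=\sum_{j=1}^{n}m_{j}^{*}\otimes g_{j}^{*}$ as a finite sum, with $g_{j}^{*}\in G^{\circ}$ linearly independent (they annihilate the finite-codimensional ideal $I$) and $m_{j}^{*}\in M^{*}$ uniquely determined.

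The heart of the argument is to upgrade each $m_{j}^{*}$ from $M^{*}$ to $M^{\circ}$. For $m\in M$, $g'\in I$ and an arbitrary $k\in G$, pairing gives $\sum_{j}\langle m_{j}^{*},m\cdot g'\rangle\langle g_{j}^{*},k\rangle=\langle f,(m\cdot g')\cdot k\rangle$. The Hom-associativity $\psi\circ(\psi\otimes\alpha)=\psi\circ(\gamma\otimes\mu)$ of the module says $(m\cdot g')\cdot\alpha(h)=\gamma(m)\cdot(g'h)$ for every $h\in G$; using that $\alpha$ is invertible I set $h=\alpha^{-1}(k)$ to obtain $(m\cdot g')\cdot k=\gamma(m)\cdot\bigl(g'\alpha^{-1}(k)\bigr)$. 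Since $I$ is an ideal, $g'\alpha^{-1}(k)\in I$, so the right-hand side lies in $M\cdot I$ and is killed by $f$; hence $\langle f,(m\cdot g')\cdot k\rangle=0$ for all $k\in G$. Linear independence of the $g_{j}^{*}$ then forces $\langle m_{j}^{*},m\cdot g'\rangle=0$ for every $j$, i.e. $m_{j}^{*}(M\cdot I)=0$ and $m_{j}^{*}\in M^{\circ}$, so $\psi^{\circ}(f)\in M^{\circ}\otimes G^{\circ}$. This step is precisely where the automorphism hypothesis is indispensable: without surjectivity of $\alpha$ one only controls $(m\cdot g')\cdot k$ for $k\in\operatorname{Im}\alpha$, which is too weak to conclude via linear independence, and I expect it to be the main obstacle. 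By comparison the closure of $\gamma^{\circ}$ is immediate, since the multiplicativity $\gamma\circ\psi=\psi\circ(\gamma\otimes\alpha)$ yields $\langle\gamma^{*}(f),m\cdot g\rangle=\langle f,\gamma(m)\cdot\alpha(g)\rangle$, and $\alpha$-stability of the ideal puts $\alpha(g)\in I$ whenever $g\in I$, whence $\gamma^{\circ}(f)\in M^{\circ}$.

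With well-definedness in hand, the remaining task is purely formal. I would verify the comultiplicativity axiom $(\gamma^{\circ}\otimes\alpha^{\circ})\circ\psi^{\circ}=\psi^{\circ}\circ\gamma^{\circ}$ and the Hom-coassociativity axiom $(\psi^{\circ}\otimes\alpha^{\circ})\circ\psi^{\circ}=(\gamma^{\circ}\otimes\Delta)\circ\psi^{\circ}$ by testing both sides against $m\otimes g$ (respectively $m\otimes g\otimes g'$) and transporting everything back to $M$ and $G$ through the defining adjunctions of the dual maps. These identities are exactly the adjoints of the module relations $\gamma\circ\psi=\psi\circ(\gamma\otimes\alpha)$ and $\psi\circ(\psi\otimes\alpha)=\psi\circ(\gamma\otimes\mu)$, so the pairing manipulations are line-for-line those already carried out in the finite-dimensional Lemma~\ref{q}; the only difference is that the closure step above now guarantees each functional produced genuinely lies in the Sweedler dual. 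This completes the verification that $(M^{\circ},\psi^{\circ},\gamma^{\circ})$ is a right Hom-comodule of $G^{\circ}$.
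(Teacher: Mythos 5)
Your proposal is correct, and it reaches the key conclusion $\psi^{\circ}(f)\in M^{\circ}\otimes G^{\circ}$ by a genuinely different route than the paper. The paper shows at the outset that $f\circ\psi$ annihilates the whole subspace $M\cdot J\otimes G+M\otimes J$ (using $\alpha(G)=G$ and Hom-associativity to get $\psi(M\cdot J\otimes G)\subseteq M\cdot J$), then passes to the quotient $(M/M\cdot J)\otimes(G/J)$ via Lemma~\ref{zz} and Lemma~\ref{iso} and pulls back a dual basis, so that \emph{both} tensor legs are seen to lie in the Sweedler duals simultaneously. You instead quotient only on the $G$-side, obtaining $\psi^{*}(f)=\sum_{j}m_{j}^{*}\otimes g_{j}^{*}$ with the $g_{j}^{*}$ linearly independent in $G^{\circ}$ and the $m_{j}^{*}$ a priori only in $M^{*}$, and then recover $m_{j}^{*}(M\cdot I)=0$ a posteriori from the identity $(m\cdot g')\cdot k=\gamma(m)\cdot\bigl(g'\alpha^{-1}(k)\bigr)$ together with linear independence of the $g_{j}^{*}$. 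This is the classical Sweedler-style ``rationality'' argument; it uses the automorphism hypothesis at the same essential point as the paper (to absorb the $\alpha$ occurring in Hom-associativity), but it trades the paper's larger quotient and two-sided pullback for a cleaner factorization plus a separate linear-independence step. Both arguments are sound; yours makes it slightly more transparent \emph{why} invertibility of $\alpha$ is indispensable (without surjectivity one only controls $(m\cdot g')\cdot k$ for $k\in\operatorname{Im}\alpha$), while the paper's version is more symmetric and parallels its proof of Theorem~\ref{f} more closely. The closure of $\gamma^{\circ}$ and the final dualization of the two module axioms are handled identically in both, by deferring to the computation of Lemma~\ref{q}.
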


\begin{proof}
Noticing that $f\in M^{\circ}$, we aim to verify that $\psi^{\circ}(f)\in M^{\circ}\otimes G^{\circ}$. There exists a finite-codimensional ideal $J$ of $G$ such that $f(M\cdot J) = 0$. Since $\alpha$ is the automorphism map of $G$, $\gamma(M)\subseteq M$ and $JG\subseteq J$, we have
    \begin{align*}
        \psi^{\circ}(f)(M\cdot J \otimes  G + M \otimes J) &=
         \psi^{\circ}(f)(M\cdot J \otimes \alpha(G) + M \otimes J) \\
        &=f\circ\psi(M\cdot J \otimes \alpha(G) + M \otimes J) \\
        &= f(\psi(M\cdot J \otimes \alpha(G)) + \psi(M \otimes J)) \\
        &= f(\psi(\gamma(M) \otimes JG)+\psi(M\otimes J))\\
        &= 0.
    \end{align*}
    Denote $\pi : M \otimes G \rightarrow M \otimes G / (M\cdot J \otimes G + M \otimes J)$ and $\operatorname{Ker}(\pi) = M\cdot J \otimes G + M \otimes J \subseteq \operatorname{Ker}(f\circ \psi).$
    Using Lemma~\ref{zz}, we can induce a unique morphism $\overline{f\psi} : M \otimes G / (M\cdot J \otimes G + M \otimes J) \rightarrow \mathbb K$ satisfying $\overline{f\psi}\circ \pi=f\circ\psi$, that is to say,
    \begin{align*}
    \overline{f\psi} \in \left( M \otimes G / (M\cdot J \otimes G + M \otimes J) \right)^*.
    \end{align*}

    According to the Lemma~\ref{iso}, we can get $M \otimes G / (M\cdot J \otimes G + M \otimes J)\cong (M / M\cdot J) \otimes (G / J)$ with finite-dimensional linear space $G / J$, so we have
\begin{align*}
    \overline{f \psi} \in (M / M\cdot J)^* \otimes (G / J)^*.
\end{align*}

    Denote two canonical maps $\pi_1 : M \rightarrow M/M\cdot J$ and $\pi_2 : G \rightarrow G/J$. And then we denote $\overline{f \psi} = \sum^n_{i,j=1}k_{ij}\overline{d_i}^* \otimes \overline{e_j}^*$ where $\overline{d_i}^* \in (M/M\cdot J)^*$, $\overline{e_j}^* \in (G/J)^*$ are the dual base elements. After a similar discussion as in Theorem~\ref{f}, for any $x\in M$ and $y\in G$, we can get that
    $$
    \langle f\circ\psi, x\otimes y\rangle=\sum_{i,j=1}^n k_{ij}\langle\overline{d_i}^*,\pi_1(x)\rangle\langle\overline{e_j}^*,\pi_2(y)\rangle.
    $$
    Denote $d_i^*=\overline{d_i}^*\circ\pi_1$ and $e_j^*=\overline{e_j}^*\circ\pi_2$. Since $d_i^*(M\cdot J)=0$ and $\operatorname{Ker}(e_j^*)\supseteq J$, we have $d_i^*\in M^{\circ}$ and $e_j^*\in G^{\circ}$. Thus, we have
    $$
    \psi^{\circ}(f)=f\circ\psi=\sum_{i,j=1}^n k_{ij} d_i^*\otimes e_j^*.
    $$  Consequently, $\psi^{\circ}(f) \in M^{\circ} \otimes G^{\circ}$.\par
    For any $m\in M^{\circ}$, there exists a finite-codimensional ideal $J$ of $G$ such that $m(M\cdot J)=0$. Then we have
    \begin{align*}
        \langle \gamma^{\circ}(m), M\cdot J\rangle=\langle m ,\gamma(M\cdot J)\rangle=0,
    \end{align*}
    since $\gamma(M\cdot J)=\gamma(M)\cdot \alpha(J)\subseteq M\cdot J$. Then we have $\gamma^{\circ}(m)(M\cdot J)=0$, which implies $\gamma^{\circ}(m)\in M^{\circ}$. Thus, $\gamma^{\circ}(M^{\circ})\subseteq M^{\circ}$. 
    Combining Theorem~\ref{c}, Theorem~\ref{c'}, and Lemma~\ref{q} together, we deduce that $\psi^{\circ}\circ\gamma^{\circ}=(\gamma^{\circ}\otimes\alpha^{\circ})\circ\psi^{\circ}$ and $(\psi^{\circ}\otimes\alpha^{\circ})\circ\psi^{\circ}=(\gamma^{\circ}\otimes\Delta)\circ\psi^{\circ}$. Hence, $M^{\circ}$ is a right Hom-comodule of $G^{\circ}$.
\end{proof}

\begin{remark}
In the proof of Theorem~\ref{f2} we must insist that the twisting map
$\alpha:G\to G$ is \emph{bijective}.  
Surjectivity gives $\alpha(G)=G$ and, in particular, $\alpha(J)=J$ for every ideal
$J\subseteq G$.  Hence
\[
  M\!\cdot\!J\otimes G
  \;=\;
  M\!\cdot\!J\otimes\alpha(G),
\]
which implies
\[
  \psi^{\circ}(f)\bigl(M\!\cdot\!J\otimes G + M\otimes J\bigr)=0.
\]
This step is essential for establishing that $\psi^{\circ}(f)$ lies in
$M^{\circ}\otimes G^{\circ}$.
\end{remark}

In the following, we will study the properties of the Hom-comodule $(M^{\circ},\psi^{\circ},\gamma^{\circ})$ proposed in Theorem~\ref{f2}. Based on the above considerations, we continue to derive the morphism of Hom-comodules as follows.

\begin{theorem} \label{f3}
    Let $(M,\psi,\gamma)$ and $(N,\psi',\gamma')$ be two right Hom-modules over Hom-algebra $(G,\mu,\alpha)$ where $\alpha$ is an automorphism map of $G$, $(M^{\circ},\psi^{\circ},\gamma^{\circ})$ and $(N^{\circ},\psi'^{\circ},\gamma'^{\circ})$ be two corresponding objects under the Sweedler duality over $G^{\circ}$. Let $\sigma : M \rightarrow N$ be a morphism of right Hom-modules. Note that $\sigma^{\circ} :=\sigma^*|_{N^{\circ}}:N^{\circ}\rightarrow M^{*}$. Then $\sigma^{\circ}$ is a morphism of Hom-comodules over $G^{\circ}$.
\end{theorem}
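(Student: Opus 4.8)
The plan is to mirror the proof of Corollary~\ref{f1}, transporting the algebra/coalgebra bookkeeping there into the module/comodule setting here. By Theorem~\ref{f2} (which is where the hypothesis that $\alpha$ is an automorphism is used), both $(M^{\circ},\psi^{\circ},\gamma^{\circ})$ and $(N^{\circ},\psi'^{\circ},\gamma'^{\circ})$ are right Hom-comodules over the Hom-coalgebra $(G^{\circ},\Delta,\alpha^{\circ})$, so the coalgebra twisting map is $\alpha^{\circ}$. Unwinding the definition of a comodule morphism for the contravariant arrow $\sigma^{\circ}\colon N^{\circ}\to M^{\circ}$, the two identities I must establish are
\[
\psi^{\circ}\circ\sigma^{\circ}=(\sigma^{\circ}\otimes\alpha^{\circ})\circ\psi'^{\circ},
\qquad
\gamma^{\circ}\circ\sigma^{\circ}=\sigma^{\circ}\circ\gamma'^{\circ}.
\]

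First I would verify that $\sigma^{\circ}$ actually lands in $M^{\circ}$. Given $b^{*}\in N^{\circ}$, choose a finite-codimensional ideal $J$ of $G$ with $b^{*}(N\cdot J)=0$. The module-morphism relation $\sigma\circ\psi=\psi'\circ(\sigma\otimes\alpha)$, namely $\sigma(m\cdot g)=\sigma(m)\cdot\alpha(g)$, together with $\alpha(J)\subseteq J$ (valid for any ideal), gives $\sigma(M\cdot J)\subseteq N\cdot\alpha(J)\subseteq N\cdot J$. Hence $\langle\sigma^{\circ}(b^{*}),M\cdot J\rangle=\langle b^{*},\sigma(M\cdot J)\rangle=0$, so $\sigma^{\circ}(b^{*})$ annihilates $M\cdot J$ for the same ideal $J$ and therefore lies in $M^{\circ}$. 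This proves $\sigma^{\circ}(N^{\circ})\subseteq M^{\circ}$.

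The two identities are then obtained by pairing against test elements and dualizing the module-morphism conditions. For the coaction identity I would evaluate $\langle\psi^{\circ}(\sigma^{\circ}(b^{*})),m\otimes g\rangle=\langle b^{*},\sigma(\psi(m\otimes g))\rangle$ and push $\sigma$ through the action using $\sigma(m\cdot g)=\sigma(m)\cdot\alpha(g)$. Writing $\psi'^{\circ}(b^{*})=\sum_{(b^{*})}b^{*}_{(0)}\otimes b^{*}_{(1)}\in N^{\circ}\otimes G^{\circ}$ by Theorem~\ref{f2}, the pairing factorizes, and the transpose relations $\langle b^{*}_{(0)},\sigma(m)\rangle=\langle\sigma^{\circ}(b^{*}_{(0)}),m\rangle$ and $\langle b^{*}_{(1)},\alpha(g)\rangle=\langle\alpha^{\circ}(b^{*}_{(1)}),g\rangle$ reassemble the right-hand side $(\sigma^{\circ}\otimes\alpha^{\circ})\circ\psi'^{\circ}$. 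The twisting identity is shorter: evaluating on $m\in M$ and using $\sigma\circ\gamma=\gamma'\circ\sigma$ to move $\sigma$ across the twist yields $\gamma^{\circ}\circ\sigma^{\circ}=\sigma^{\circ}\circ\gamma'^{\circ}$.

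I expect the one genuine point of care — the main obstacle — to be the bookkeeping forced by contravariance rather than any analytic difficulty. Since dualization reverses arrows, I must keep straight that $\sigma^{\circ}$ runs from $N^{\circ}$ to $M^{\circ}$ and that it is the coaction $\psi'^{\circ}$ on the source $N^{\circ}$ (not $\psi^{\circ}$) that gets expanded in Sweedler notation; the twist applied to the $G^{\circ}$-factor is precisely $\alpha^{\circ}$, which is exactly the coalgebra twisting $\beta=\alpha^{\circ}$ demanded by the comodule-morphism axiom, so the two match. The automorphism hypothesis on $\alpha$ enters only through Theorem~\ref{f2}: it guarantees that $\psi'^{\circ}(b^{*})$ genuinely lies in $N^{\circ}\otimes G^{\circ}$, which is what licenses the factorization above; without it this expansion would be unavailable and the argument would break.
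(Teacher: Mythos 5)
Your proposal is correct and follows essentially the same route as the paper's proof: you first verify $\sigma^{\circ}(N^{\circ})\subseteq M^{\circ}$ by combining $\sigma\circ\psi=\psi'\circ(\sigma\otimes\alpha)$ with $\alpha(J)\subseteq J$, and then establish both comodule-morphism identities by pairing against test elements and dualizing the module-morphism conditions, exactly as in the paper. Your added remark that the automorphism hypothesis enters only through Theorem~\ref{f2} (to guarantee $\psi'^{\circ}(b^{*})\in N^{\circ}\otimes G^{\circ}$) is an accurate reading of where that assumption is actually used.
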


\begin{proof}
    For all $f \in N^{\circ}$, let $J$ be a finite-codimensional ideal of $G$ such that $f(N\cdot J) = 0$. By the properties of the morphism of the right Hom-modules, we have
    \begin{align*}
    \sigma^{\circ}(f)(M\cdot J) &= \sigma^{\circ}(f)\circ\psi(M \otimes J) \\
    &= f(\sigma\circ \psi(M \otimes J)) \\
    &= f(\psi' \circ (\sigma \otimes \alpha)(M \otimes J)) \\
    &= f(\sigma(M)\cdot \alpha(J))\\
    &= 0,
    \end{align*}
    since $\sigma(M)\subseteq N$ and $\alpha(J)\subseteq J$. Hence $\sigma^{\circ}(N^{\circ}) \subseteq M^{\circ}$.
    
    For any $b^*\in N^{\circ}$ and $x\in M, y\in G$, we have
    \begin{align*}
        \langle(\sigma^{\circ}\otimes\alpha^{\circ})\circ\psi'^{\circ}(b^*), x\otimes y\rangle&=\sum_{(b^*)}\langle(\sigma^{\circ}\otimes\alpha^{\circ})(b^*_{(0)}\otimes b^*_{(1)}), x\otimes y\rangle\\
        &=\sum_{(b^*)}\langle \sigma^{\circ}( b^*_{(0)}), x\rangle\langle \alpha^{\circ} (b^*_{(1)}), y\rangle\\
        &=\langle \psi'^{\circ}(b^*),  (\sigma\otimes\alpha)(x\otimes y)\rangle\\
        &=\langle b^*, \psi'\circ(\sigma\otimes\alpha)(x\otimes y)\rangle\\
        &=\langle b^*, \sigma\circ\psi(x\otimes y)\rangle\\
        &=\langle\psi^{\circ}\circ\sigma^{\circ}(b^*), x\otimes y\rangle.
    \end{align*}
    Thus we obtain $(\sigma^{\circ}\otimes\alpha^{\circ})\circ\psi'^{\circ}=\psi^{\circ}\circ\sigma^{\circ}$.
    And then for all $c^*\in N^{\circ}$ and $x\in M$,
    \begin{align*}
        \langle \gamma^{\circ}\circ\sigma^{\circ}(c^*), x\rangle
        =\langle c^*, \sigma\circ\gamma(x)\rangle
        =\langle c^*, \gamma'\circ\sigma(x)\rangle
        =\langle \sigma^{\circ}\circ\gamma'^{\circ}(c^*), x\rangle.
    \end{align*}
    Thus it follows that  $\gamma^{\circ}\circ\sigma^{\circ}=\sigma^{\circ}\circ\gamma'^{\circ}$. 
\end{proof}

    In the special case, when $M=G$, we have $M\cdot J=GJ\subseteq J$. Then the Sweedler duality of the Hom-module $M$ reduces to the duality between Hom-algebras and Hom-coalgebras.

\section{Hom-deformed binary linearly recursive sequences}\label{sect4}

The relationship between binary linearly recursive sequences and Sweedler duality, as well as their constructions, is an interesting problem that attracts many researchers. We do not emphasise this here; see ~\cite{A, GW} and their references, for example. While constructing a Hom-deformed version of binary linearly recursive sequences has been challenging due to the absence of a well-defined Sweedler duality for Hom-algebras, we now utilize the previously established definition of Sweedler duality for Hom-algebras to accomplish this task.

Now consider a polynomial algebra \(A=\mathbb{K}[x]\) in one variable \(x\). Denote a linear map $\alpha:A\rightarrow A$ with the notation $\alpha(x^i)=k^ix^i$ where $i\in\mathbb{N}$ and $k$ is a non-zero constant in the field. Then we can construct a Hom-algebra $(A,\mu,\alpha)$ with the product $\mu:A\otimes A\rightarrow A$ such that $\mu(p_1[x]\otimes p_2[x])=\alpha(p_1[x])\alpha(p_2[x])$. On the other hand, we can identify an element \(f\) in conventional duality \(A^{*}\) with the sequences \((f_{n})_{n\geq 0}=(f_{0},f_{1},f_{2},\cdots)\), where \(f_{n}=f(x^{n})\) for \(n\geq 0\). Using Theorem~\ref{f}, under the Sweedler duality of \(A\), we have a Hom-coalgebra \(A^{\circ}=\{f\in A^{*}\mid f(J)=0,\text{ $J$ is a finite-codimensional ideal of $A$ }\}\). 

We now consider the binary polynomial algebra \(B=\mathbb{K}[x,y]\) with a group-like element \(x\) and with a \(y\) \((x,1)\)-primitive element. Let $\alpha:B\rightarrow B$ be a linear map satisfying $\alpha(x^iy^j)=k^{i+j}x^iy^j$, where $i,j\in \mathbb N$ and $k$ is a non-zero constant in the field $\mathbb K$. Then we can see that $(B,\mu,\alpha)$ a Hom-algebra where the product $\cdot:B\otimes B\rightarrow B$ with the notation $p_1[x,y]\cdot p_2[x,y]=\alpha(p_1[x,y])\alpha(p_2[x,y])$.  With the Theorem~\ref{f}, the Sweedler duality \(B^{\circ}\) is a Hom-coalgebra. We identify each \(f\) in the conventional duality \(A^{*}\) as a binary-sequence \((f_{i,j})\) for \(i,j>0\), where \(f_{i,j}=f(x^{i}y^{j})\). A row of such a binary-sequence is a sequence \(\{f_{i,p}\mid p\geq 0\}\) for a fixed \(i\geq 0\), which we say is parallel to the \(y\)-axis, or a sequence \(\{f_{p,j}\mid p\geq 0\}\) for a fixed \(j\geq 0\), which we say is parallel to the \(x\)-axis.

Let \(f\) be in \(B^{\circ}\), \(f(J)=0\) for the finite-codimensional ideal \(J\) of \(B\). For each \(i,j\), the powers of \(x(resp. y)\) span a finite-dimensional space in \(A/J\), so there is a minimal monic \(h_{i}(x)(resp. h_{j}(y))\) in \(\mathbb{K}[x,y]\) such that each row of \(f\) parallel to the \(y(resp. x)\)-axis satisfies \(h_{i}(x)(resp. h_{j}(y))\). Thus, \(J\) contains the finite-codimensional elementary ideal \(\Gamma\) generated by \(h_{i}(x)h_{j}(y)\).

In the subsequent two subsections, we will introduce finite-codimensional ideals with recursive relations and quantum binary sequences in the Hom-deformed version. Since our chosen map $\alpha$ ensures that the Hom-associator satisfies the pentagon axiom in tensor categories, the resulting structures of these two objects in the Hom-version remain proximate to Wang's original results presented in his seminal work \cite{GW}. Let us now proceed to a formal exposition of these two constructs.

\subsection{Quantum binary sequences}
Let \( C = \mathbb{K}_q[x, y] \) with \( yx = qxy \), where \( 0 \neq q \in \mathbb{K} \). Then, \( f \in C^* \) is regarded as the binary sequences \( (f_{m,n})_{m,n \geq 0} = (f_{0,0}, f_{0,1}, \dots, f_{0,n}, f_{1,0}, \dots, f_{1,n}, \dots, f_{m,0}, \dots, f_{m,n}, \dots) \), where
\[
f_{m,n} = f(x^m y^n) = q^{-mn} f(y^n x^m) = q^{-mn} f_{n,m},
\]
for all \( m, n \geq 0 \). We call them the \( q \)-binary sequences. Then we will give some polynomials based on the parameter $q$:
\begin{enumerate}
    \item Given a \(q\neq 0\) in k and an integer \(n>0\), one knows
\[(n)_{q}=\frac{q^{n}-1}{q-1}=1+q+\cdots+q^{n-1}.\]
The \(q\)-factorial of \(n\) is given by \((0)!_{q}=1\), and if \(n>0\),
\[(n)!_{q}=(1)_{q}(2)_{q}\cdots(n)_{q}=\frac{(q-1)(q^{2}-1)\cdots(q^{n}-1)}{(q-1)^{n}}.\] It is a polynomial in \(q\) with coefficients in \(\mathbb{Z}\). Moreover, it has value at \(q=1\) equal to \(n!\).
    \item The Gaussian polynomials is given by for \(0\leq i\leq n\)
\[\left(\begin{array}{c}n\\ i\end{array}\right)_{q}=\frac{(n)_{q}!}{(i)_{q}!(n-i)_{q}!}.\]
    \item Let \(x\) and \(y\) be variables subject to the quantum plane relation \(yx=qxy\). Then, for any \(n>0\), with the product $\cdot$, we have
\[(x+y)^{n}=\sum_{0\leq i\leq n}\binom{n}{i}_{q}k^{\frac{(n-1)(n+2)}{2}}x^{i}y^{n-i}.\]
If \(k=1=q\), the coefficient of the ordinary binomial remains \(\binom{n}{i}\).
\end{enumerate}

Assuming \( C \) is a Hom-algebra $(C,\cdot,\alpha)$, where $\alpha:C\rightarrow C$ is a linear map with $\alpha(x^iy^j)=k^{i+j}x^iy^j$, $i,j\in \mathbb N$ and $k$ is a non-zero constant in the field $\mathbb K$, the product $\cdot:C\otimes C\rightarrow C$, $p_q[x,y]\cdot\overline{p}_q[x,y]=\alpha(p_q[x,y])\alpha(\overline{p}_q[x,y])$. So the quantum convolution product on \( C^* \) is given by \( f_{m,n} \ast_q g_{m,n} = h_{m,n} \), where
\[
h_{m,n} = \sum_{0 \leq k \leq n} \binom{n}{k}_q f_{m+k,n-k} \otimes g_{m,k} \quad \text{for } m, n \geq 0.
\]

As a final remark of this subsection, the interested reader may consider the problem of whether the space of Hom-binary linearly recursive sequences is closed or not under the quantum convolution product, based on the version we have obtained.

\subsection{Finite-codimensional ideals and recursive relations}
Now we know that the finite-codimensional ideal $J$ contains the monic binary polynomial $h(x,y)$. To examine Sweedler duality in more detail, we will study $h(x,y)$ contained in $J$.  By a finite-codimensional ideal \( J \) of \( C = \mathbb{K}_q[x,y] \), we mean that a non-zero ideal generated by a monic binary polynomial:
\begin{align*}
h(x, y) &= x^r y^s - h_{1,0} x^{r-1} y^s - \dots - h_{r,0} y^s\\
&- h_{0,1} x^r y^{s-1} - h_{1,1} x^{r-1} y^{s-1}- \dots - h_{r,1} y^{s-1}\\
&- h_{0,2} x^r y^{s-2} - h_{1,2} x^{r-1} y^{s-2}- \dots - h_{r,2} y^{s-2}\\ 
&\cdots\quad \cdots\quad \cdots\quad \cdots\\
&- h_{0,s} x^r - h_{1,s} x^{r-1} - \dots - h_{r,s}.
\end{align*}

We have the following cases due to the condition \( f(J) = 0 \):
\begin{itemize}
    \item \textbf{Case 1:} If \( f((x^{m-r}\cdot h(x, y))\cdot y^{n-s}) = 0 \), then we have a binary linearly recursive sequence \( f = (f_{m,n})_{m \geq r, n \geq s} \) satisfying the recursive relation \( h(x, y) \), where
\begin{align*}
f_{m,n} &= h_{1,0} f_{m-1,n} + h_{2,0} f_{m-2,n} + \dots + h_{r,0} f_{m-r,n}\\
&+ h_{0,1} f_{m,n-1} + h_{1,1} f_{m-1,n-1} + \dots + h_{r,1} f_{m-r,n-1}\\
&+ h_{0,2} f_{m,n-2} + h_{1,2} f_{m-1,n-2} + \dots + h_{r,2} f_{m-r,n-2}\\
&\cdots\quad \cdots\quad \cdots\quad \cdots\\
&+ h_{0,s} f_{m,n-s} + h_{1,s} f_{m-1,n-s} + \dots + h_{r,s} f_{m-r,n-s}.
\end{align*}
    \item \textbf{Case 2:} If \( f((x^{m-r}\cdot y^{n-s})\cdot h(x, y)) = 0 \), then we have a parameterized binary linearly recursive sequence \( f = (f_{m,n})_{m \geq r, n \geq s} \) satisfying the recursive relation \( h(x, y) \), where
\begin{align*}
f_{m,n} &= q^{-(n-s)} h_{1,0} f_{m-1,n} + q^{-2(n-s)} h_{2,0} f_{m-2,n} + \dots + q^{-r(n-s)} h_{r,0} f_{m-r,n}\\
&+ h_{0,1} f_{m,n-1} + q^{-(n-s)} h_{1,1} f_{m-1,n-1} + q^{-2(n-s)} h_{2,1} f_{m-2,n-1} + \dots + q^{-r(n-s)} h_{r,1} f_{m-r,n-1}\\
&+ h_{0,2} f_{m,n-2} + q^{-(n-s)} h_{1,2} f_{m-1,n-2}+ q^{-2(n-s)} h_{2,2} f_{m-2,n-2} + \dots + q^{-r(n-s)} h_{r,2} f_{m-r,n-2}\\
&\cdots\quad \cdots\quad \cdots\quad \cdots\\
&+ h_{0,s} f_{m,n-s} + q^{-(n-s)} h_{1,s} f_{m-1,n-s}+ q^{-2(n-s)} h_{2,s} f_{m-2,n-s} + \dots + q^{-r(n-s)} h_{r,s} f_{m-r,n-s}.
\end{align*}
    \item \textbf{Case 3:} If \( f((h(x, y)\cdot x^{m-r})\cdot y^{n-s}) = 0 \), then we have a parameterized binary linearly recursive sequence \( f = (f_{m,n})_{m \geq r, n \geq s} \) satisfying the recursive relation \( h(x, y) \), where
\begin{align*}
f_{m,n} &= h_{1,0} f_{m-1,n} + h_{2,0} f_{m-2,n} + \dots + h_{r,0} f_{m-r,n}\\
&+ q^{-(m-r)} h_{0,1} f_{m,n-1} + q^{-(m-r)} h_{1,1} f_{m-1,n-1} + \dots + q^{-(m-r)} h_{r,1} f_{m-r,n-1}\\
&+q^{-2(m-r)} h_{0,1} f_{m,n-2} + q^{-2(m-r)} h_{1,2} f_{m-1,n-2} + \dots + q^{-2(m-r)} h_{r,2} f_{m-r,n-2}\\ 
&\cdots\quad \cdots\quad \cdots\quad \cdots\\
&+ q^{-s(m-r)} h_{0,s} f_{m,n-s}+ q^{-s(m-r)} h_{1,s} f_{m-1,n-s} + \dots + q^{-s(m-r)} h_{r,s} f_{m-r,n-s}.
\end{align*}
\end{itemize}

\begin{remark}
\begin{enumerate}
    \item Let \( h'_{i,j} = q^{-i(n-s)} h_{i,j} \) in Case 1. Consequently, we obtain a new monic binary polynomial \( h'(x, y) \), leading to a new binary linearly recursive sequence \( f' = (f'_{m,n})_{m \geq r, n \geq s} \) that satisfies the recursive relation defined by \( h'(x, y) \).
    \item Similarly, define \( h''_{i,j} = q^{-j(m-r)} h_{i,j} \) for Case 2. This leads us to a new monic binary polynomial \( h''(x, y) \), which in turn generates a new binary linearly recursive sequence \( f'' = (f''_{m,n})_{m \geq r, n \geq s} \).
\end{enumerate}
\end{remark}

\bigskip
\noindent{\bf Acknowledgements
}Sun and Wang were partially supported by NSFC grant No. 12271089, and C. Zhang was partially supported by NSFC grant No. 12101111. We would like to thank the referees for their valuable feedback, which has allowed us to correct several typographical errors and improve the paper's overall presentation.

\bibliographystyle{plain}

\end{document}